\newtheorem{theorem}{Theorem}[section]
\newtheorem{corollary}[theorem]{Corollary}
\newtheorem{definition}[theorem]{Definition}
\newtheorem{proposition}[theorem]{Proposition}
\newtheorem{lemma}[theorem]{Lemma}
\newenvironment{proof}{{\sc Proof.}}{\medskip}
\newcommand{\qed}{\qquad $\Box$}
\def\AA{A}
\renewcommand{\pleq}{\leq}
\begin{document}

\title{Supercharacter theories and Schur rings}
\author{Anders O. F. Hendrickson\\
        {\em Concordia College, Moorhead, Minnesota, USA}}
\date{June 7, 2010}

\def\sidenote#1{\marginpar{\tiny\raggedright #1}}

\maketitle

\begin{abstract}
  Diaconis and Isaacs have defined the supercharacter theories 
  of a finite group
  to be certain approximations to the ordinary character theory of the group.
  We make explicit the connection between supercharacter theories
  and Schur rings,
  and we provide supercharacter theory constructions which correspond
  to Schur ring constructions of Leung and Man and of Tamaschke.
\end{abstract}

\section{Introduction}

Supercharacter theories of a finite group 
were defined by Diaconis and Isaacs 
\cite{diaconis_isaacs} as approximations
to the group's ordinary character theory.
In a supercharacter theory
certain (generally reducible) characters take the place of irreducible characters,
and the role of conjugacy classes is played by certain unions of conjugacy classes.
For the group $U_n(\F_q)$ 
of upper triangular matrices over the field of size $q$
with all diagonal entries one \cite{andre1995, andre1999, andre2002, yan},
and more generally for 
groups of the form $1+\mathfrak{n}$ where $\mathfrak{n}$ is a nilpotent associative $\F_q$-algebra \cite{diaconis_isaacs},
a particularly nice supercharacter theory exists which is 
simple enough to be computed explicitly,
yet still rich enough to handle some problems
that traditionally required knowing the full character theory
\cite{arias2004}.
More recent interest has turned to the relationship of these supercharacters with 
the Hopf algebra of symmetric functions of noncommuting variables \cite{thiem_branching}.

As it turns out, there is a strong connection between the supercharacter theories defined in \cite{diaconis_isaacs}
and the Schur rings of a finite group.  
In fact, several of the initial theorems in \cite{diaconis_isaacs}
appeared in a different form in the work of Tamaschke on Schur rings \cite{tamaschke1970}.
%Leung and Man achieved a classification of all Schur rings over cyclic gr
Related work is also being done by Humphries and Johnson \cite{humphries_johnson2008},
who ask (in the language of \cite{diaconis_isaacs}) 
which groups have a character table that is identical with a supercharacter tables of some abelian group.
In this paper we formalize the connection between supercharacter theories and Schur rings
and give constructions of supercharacter theories
corresponding to the Schur ring wedge product of Leung and Man \cite{leung_man1996}
and the tensor product of Tamaschke \cite{tamaschke1970}.

\section{Correspondence}

\begin{definition}\label{defn_supercharthy}
  Let $G$ be a finite group,
  let $\KK$ be a partition of $G$,
  and let $\XX$ be a partition of the set $\Irr(G)$.
  Suppose that for every part $X\in\XX$ there exists a character $\chi_{_X}$
  whose irreducible constituents lie in $X$,
  and suppose the following three conditions hold.
  \begin{enumerate}
    \item Each of the characters $\chi_{_X}$
          is constant on every part $K\in\KK$.
    \item $|\XX|=|\KK|$.
    \item Every irreducible character is a constituent of some $\chi_X$.
%    \item The set $\{1\}$ is one of the parts of $\KK$.
  \end{enumerate}
  Then 
  we call the characters $\chi_{_X}$ \defnstyle{supercharacters},
  we call the members of $\KK$ \defnstyle{superclasses},
  and we say that the ordered pair $(\XX,\KK)$ 
  is a \defnstyle{supercharacter theory}.
  If $\CCC=(\XX,\KK)$ is a supercharacter theory,
  then we define $|\CCC|$ to be the integer equal to
  both $|\XX|$ and $|\KK|$.
  We write $\Sup(G)$ for the set of all supercharacter theories of $G$.
\end{definition}

Assume that $(\XX,\KK)$ is a supercharacter theory of a group $G$,
and for every subset $X$ of $\Irr(G)$ let $\s_X$ be the character $\sum_{\psi\in X}\psi(1)\psi$.
Diaconis and Isaacs
prove in \cite[Lemma 2.1]{diaconis_isaacs}
that $\{1\}\in\KK$, that $\{1_G\}\in\XX$,
and that for each $X\in\XX$, 
the supercharacter $\chi_{_X}$ must be a constant multiple of $\s_X$.
It is therefore no loss to assume that $\chi_{_X}=\s_X$,
and we shall make that assumption throughout this paper.
It is also shown in 
\cite[Theorem 2.2(c)]{diaconis_isaacs}
that if $\CCC=(\XX,\KK)$ is a supercharacter theory,
then each of the partitions $\XX$ and $\KK$
uniquely determines the other.

The concept of a Schur ring is significantly older,
having been defined by Schur \cite{schur_srings} and developed further by Wielandt \cite{wielandt}.
In the following definition, for each subset $K$ of a group $G$,
we let $\hat{K}=\sum_{g\in K} g$ in the group algebra $\C[G]$.

\begin{definition}\label{defn_sring}
  Let $G$ be a finite group.
  A subring $\AA$ of the group algebra $\C[G]$ is called a \defnstyle{Schur ring} or \defnstyle{S-ring} over $G$
  if there exists a set partition $\KK$ of $G$ satisfying the following conditions:
  \begin{enumerate}
    \item The set $\hat{\KK}=\{\hat{K}: K\in\KK\}$ is a linear basis of $\AA$,
    \item $\{1\}\in\KK$, and
    \item $\{g\inv: g\in K\}\in\KK$ for all $K\in\KK$. \label{xinvcondn}
  \end{enumerate}
  In this case we say the partition $\KK$ is a \defnstyle{Schur partition}
  and call its parts the \defnstyle{basic sets} of $\AA$.
  We will usually write $\AA=S_\KK$.
\end{definition}

Note that the partition $\KK$ is completely determined by the ring $\AA$.
S-rings were originally used by Schur and Wielandt in the study of permutation groups \cite[pp.~403--412]{scott},
but recently they have found applications in algebraic combinatorics,
especially in the study of circulant graphs.
A good survey of recent developments in S-rings can be found in \cite{muzychuk_ponomarenko}.

We next establish the correspondence between supercharacter theories and certain S-rings,
making use of the following lemma.
  Recall that every character $\chi\in\Irr(G)$
  has a corresponding central idempotent
  $e_\chi = \rec{|G|}\chi(1)\sum_{g\in G} \overline{\chi(g)}\,g$.
For a subset $X\sseq\Irr(G)$, let $f_X=\sum_{\psi\in X} e_\psi$.

\begin{lemma}\label{lem_subalgyieldsfZs}
  %Phrase as bijection?
  Let $G$ be a group and let $\AA$ be a subalgebra of $\Zb(\C[G])$.
  Then there exists a unique partition $\XX$ of $\Irr(G)$
  such that $\{f_X: X\in\XX\}$ is a basis for $\AA$.
\end{lemma}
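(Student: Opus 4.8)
The plan is to read $\XX$ off from the idempotent structure of $\AA$, using that $\Zb(\C[G])$ and all of its subalgebras are products of copies of $\C$. First I would recall the structure of the ambient algebra: by Maschke's theorem $\C[G]$ is semisimple, so $\Zb(\C[G])$ is a product of copies of $\C$, with the central idempotents $e_\chi$ ($\chi\in\Irr(G)$) pairwise orthogonal, summing to $1$, and forming a basis. Writing an element $a$ of the centre as $a=\sum_\chi\lambda_\chi e_\chi$, the map $a\mapsto(\lambda_\chi)_\chi$ is an algebra isomorphism $\Zb(\C[G])\cong\C^{\Irr(G)}$ sending $e_\chi$ to a standard basis vector and $f_X$ to the indicator vector of $X$. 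In particular the idempotents of $\Zb(\C[G])$ are exactly the elements $f_X$ with $X\sseq\Irr(G)$, and, since the $e_\chi$ are linearly independent, distinct subsets yield distinct idempotents.

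Next I would analyse $\AA$ itself. As a subalgebra of $\Zb(\C[G])$, which has no nonzero nilpotent elements, $\AA$ has none either; being moreover a finite-dimensional commutative algebra over the algebraically closed field $\C$, it is therefore a product of copies of $\C$. Hence $\AA$ has a unique complete set $f^{(1)},\dots,f^{(m)}$ of pairwise orthogonal primitive idempotents, and this set is a basis of $\AA$. Each $f^{(k)}$ is an idempotent of $\Zb(\C[G])$, so the previous paragraph supplies a unique $X_k\sseq\Irr(G)$ with $f^{(k)}=f_{X_k}$; orthogonality of the $f^{(k)}$ forces the $X_k$ to be pairwise disjoint, and $\sum_k f^{(k)}=1=\sum_\chi e_\chi$ forces $\bigcup_k X_k=\Irr(G)$. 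Thus $\XX=\{X_1,\dots,X_m\}$ is a partition of $\Irr(G)$ with $\{f_X:X\in\XX\}$ a basis of $\AA$. The one point that genuinely needs care is the equality $\sum_k f^{(k)}=1_{\C[G]}$, i.e.\ that the identity of the subalgebra $\AA$ coincides with that of $\C[G]$; this is the usual convention for ``subalgebra'' and, in the Schur-ring setting of this paper, is automatic since $\{1\}\in\KK$ gives $1=\hat{\{1\}}\in\AA$.

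For uniqueness, suppose $\XX'$ is another partition of $\Irr(G)$ for which $\{f_{X'}:X'\in\XX'\}$ is a basis of $\AA$. Its members are nonzero idempotents, pairwise orthogonal because $f_{X'}f_{Y'}=\sum_{\chi\in X'\cap Y'}e_\chi$, and they sum to $1$; a complete set of pairwise orthogonal idempotents that is also a basis must consist of primitive idempotents, for splitting any one of them into a sum of two nonzero orthogonal idempotents of $\AA$ would produce $\dim\AA+1$ linearly independent (hence too many) orthogonal idempotents. So $\{f_{X'}:X'\in\XX'\}$ is the unique set of primitive idempotents of $\AA$ and therefore equals $\{f_X:X\in\XX\}$; since $f_{X'}=f_X$ forces $X'=X$, we get $\XX'=\XX$.
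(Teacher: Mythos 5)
Your proof is correct and takes essentially the same route as the paper's: both use that $\AA$, as a subalgebra of the commutative semisimple algebra $\Zb(\C[G])$, is a product of copies of $\C$, hence spanned by pairwise orthogonal idempotents which must be of the form $f_X$, with the corresponding subsets partitioning $\Irr(G)$ because the idempotents are orthogonal and sum to $1$. Your uniqueness argument via primitive idempotents and your explicit flagging of the convention $1_{\AA}=1_{\C[G]}$ (which the paper uses tacitly when asserting the idempotents sum to $1$, and which is indeed guaranteed in the S-ring application since $\hat{\{1\}}\in\AA$) are only minor variations on the paper's argument, not a different method.
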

\begin{proof}
  Recall that the set $\{e_\chi: \chi\in\Irr(G)\}$ is a basis for $\Zb(\C[G])$.
  Thus $\Zb(\C[G])$ is isomorphic to a direct sum of copies of $\C$,
  so it contains no nilpotent elements,
  and neither does its subalgebra $\AA$;
  hence the Jacobson radical $\Jb(\AA)=0$.
  Then by Wedderburn's theorem
  $\AA$ is a direct sum of full matrix rings;
  but since $\AA$ is commutative, 
  those are rings of $1\by 1$ matrices,
  so $\AA$ too is a direct sum of copies of $\C$.
  Hence $\AA$ is the linear span of some idempotents $f_1,\ldots,f_r$
  whose sum is 1 and whose pairwise products are 0.
  But every idempotent in $\Zb(\C[G])$ is a sum of some distinct $e_\chi$,
  and because 
  %$\sum_{i=1}^r f_i = 1 = \sum_{\mathclap{\chi\in\Irr(G)}} e_\chi$ 
  $\sum_{i=1}^r f_i = 1 = \sum_{{\chi\in\Irr(G)}} e_\chi$ 
  but the product $f_i f_j=0$ for $i\neq j$, 
  every $e_\chi$ must appear in exactly one $f_i$.
  Thus there exists a partition $\XX$ of $\Irr(G)$ such that 
  $\{f_X: X\in\XX\}=\{f_1,\ldots,f_r\}$,
  and this is the desired basis for $\AA$.

  To show uniqueness, suppose $\YY$ is also a partition of $\Irr(G)$
  such that 
  $$\spanof{f_Y: Y\in\YY}=\AA=\spanof{f_X:X\in\XX}.$$
  Let $\chi\in\Irr(G)$, and let $Y_0\in\YY$ and $X_0\in\XX$ 
  be the parts containing $\chi$.
  Then because $f_{Y_0}\in\AA=\spanof{f_X:X\in\XX}$, 
  the set $Y_0$ must be a union of parts of $\XX$;
  in particular $X_0\sseq Y_0$.
  But by symmetry $Y_0\sseq X_0$, so $Y_0=X_0$.
  Since the parts of $\YY$ and $\XX$ containing $\chi$ are identical
  for all $\chi\in\Irr(G)$, it follows that $\YY=\XX$.\qed
\end{proof}

\begin{proposition}\label{sringbijection}
  Let $G$ be a finite group.
  Then there is a bijection
  $$\begin{array}{rccc}
           & \left\{\parbox{1.75in}{\raggedright supercharacter theories $(\XX,\KK)$ of $G$}\right\}
           &\longleftrightarrow&
           \left\{\parbox{1.5in}{S-rings of $G$ \\ contained in $\Zb(\C[G])$}\right\} \\
      & (\XX,\KK) & \longmapsto & \spn_\C\{\Khat: K\in\KK\}.
    \end{array}$$
\end{proposition}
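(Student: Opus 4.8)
The plan is to construct the inverse map out of Lemma~\ref{lem_subalgyieldsfZs} and then verify that the two composites are the identity, so that the real content is two well-definedness checks. First I would show that the displayed map is well-defined, i.e.\ that $\AA:=\spn_\C\{\hat{K}:K\in\KK\}$ is an S-ring inside $\Zb(\C[G])$ whenever $(\XX,\KK)$ is a supercharacter theory. The key point is that $\AA$ carries a \emph{second} basis $\{f_X:X\in\XX\}$. Indeed, writing out the idempotents gives $f_X=\rec{|G|}\sum_{g\in G}\overline{\s_X(g)}\,g$, and since $\s_X$ is constant on each superclass this is a $\C$-linear combination of the $\hat{K}$, so $f_X\in\AA$; as the $f_X$ are nonzero, pairwise orthogonal idempotents (hence linearly independent) and $|\XX|=|\KK|=\dim_\C\AA$, they do form a basis. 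From this alone one reads off that $\AA\sseq\Zb(\C[G])$, that $\AA$ is a subalgebra (since $f_Xf_Y=0$ for $X\neq Y$ and $f_X^2=f_X$), and that $1_G=\sum_X f_X\in\AA$; expanding $1_G$ in the basis $\hat{\KK}$ and matching supports then forces $\{1\}\in\KK$.

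Next I would verify the inversion condition~\ref{xinvcondn} of Definition~\ref{defn_sring}. Let $x\mapsto x^{*}$ be the conjugate-linear anti-involution of $\C[G]$ with $g^{*}=g\inv$; a direct computation gives $e_\chi^{*}=e_\chi$, so $\AA=\spn_\C\{f_X\}$ is $*$-stable, and hence $\widehat{K\inv}=\hat{K}^{*}\in\AA$ for each $K\in\KK$. Since the members of the basis $\hat{\KK}$ of $\AA$ are the $\{0,1\}$-indicator functions of the parts of $\KK$, it follows that $K\inv$ is a union of parts of $\KK$; applying the same to any part $L\sseq K\inv$ shows that $L\inv$ is a union of parts all contained in the single part $K$, whence $L\inv=K$ and so $K\inv=L\in\KK$. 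This completes the proof that $\AA$ is an S-ring contained in $\Zb(\C[G])$.

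Then I would check surjectivity. Given an S-ring $\AA\sseq\Zb(\C[G])$ with basic sets $\KK$, Lemma~\ref{lem_subalgyieldsfZs} supplies a unique partition $\XX$ of $\Irr(G)$ with $\{f_X:X\in\XX\}$ a basis of $\AA$, and I claim $(\XX,\KK)\in\Sup(G)$, with $\chi_X=\s_X$ (whose irreducible constituents are exactly the members of $X$). Conditions~(2) and~(3) of Definition~\ref{defn_supercharthy} are immediate, since $|\XX|=\dim_\C\AA=|\KK|$ and $\XX$ partitions $\Irr(G)$. For condition~(1), expand $f_X$ both as $\sum_{K\in\KK}c_{X,K}\hat{K}$ (possible since $\hat{\KK}$ is a basis of $\AA$) and as $\rec{|G|}\sum_{g\in G}\overline{\s_X(g)}\,g$; matching the coefficient of each $g\in K$ gives $\s_X(g)=|G|\,\overline{c_{X,K}}$, which depends only on $K$, so $\s_X$ is constant on every superclass.

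Finally I would observe that the two composites are identities: starting from $(\XX,\KK)\in\Sup(G)$, the image $\AA$ has $\KK$ as its Schur partition and, by the first two paragraphs, $\{f_X:X\in\XX\}$ as a basis, so the uniqueness in Lemma~\ref{lem_subalgyieldsfZs} together with the fact (noted after Definition~\ref{defn_sring}) that an S-ring determines its Schur partition returns exactly $(\XX,\KK)$; and starting from an S-ring with basic sets $\KK$, the construction returns $\spn_\C\hat{\KK}=\AA$. I expect the main obstacle to be the well-definedness of the forward map, and within it the observation that $\spn_\C\hat{\KK}=\spn_\C\{f_X:X\in\XX\}$, from which centrality, closure under multiplication, and the matching of the two partitions all follow at once; the inversion condition then needs the separate short argument above, since closure under $g\mapsto g\inv$ is not visible from the multiplicative structure alone.
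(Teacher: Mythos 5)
Your proposal is correct, and its core surjectivity step (apply Lemma~\ref{lem_subalgyieldsfZs} to get $\XX$, then match coefficients of $f_X = \rec{|G|}\sum_{g\in G}\overline{\s_X(g)}\,g$ against the basis $\{\Khat : K\in\KK\}$ to get constancy of $\s_X$ on superclasses) is exactly the paper's argument. Where you diverge is in the forward direction and in how bijectivity is packaged. The paper simply cites Diaconis--Isaacs (Lemma 2.1 and Theorem 2.2(b),(f)) to see that $\spn_\C\{\Khat: K\in\KK\}$ is a subalgebra of $\Zb(\C[G])$ containing $\hat{\{1\}}$ and closed under $K\mapsto K\inv$, and it proves injectivity directly by recovering $\KK$ from $\AA$ as the minimal subsets $K$ with $\Khat\in\AA$; you instead give a self-contained verification via the observation that $\{f_X: X\in\XX\}$ is a second basis of $\AA$ (which immediately yields centrality, closure under multiplication, $1\in\AA$, and $\{1\}\in\KK$), prove the inversion condition from scratch using the conjugate-linear anti-involution $g\mapsto g\inv$ fixing each $e_\chi$ together with a support-matching argument, and then establish bijectivity by exhibiting the inverse map and checking both composites, leaning on the remark after Definition~\ref{defn_sring} that an S-ring determines its Schur partition. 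Your route is longer but independent of the cited results, and your $*$-involution argument makes explicit the phenomenon the paper only notes after the proposition, namely that condition~(\ref{xinvcondn}) of Definition~\ref{defn_sring} is automatic for such central subrings; the paper's route is shorter precisely because it outsources those verifications to \cite{diaconis_isaacs}.
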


\begin{proof}
  Let $(\XX,\KK)\in\Sup(G)$,
  and let $A$ be the subspace spanned by $\{\Khat: K\in\KK\}$.
  %As Diaconis and Isaacs discuss in \cite[Theorem 2.2(b)]{diaconis_isaacs},
  %because each superclass $K\in\KK$ is a union of conjugacy classes
  %and the superclasses are disjoint,
  %the set of superclass sums $\{\Khat: K\in\KK\}$
  %is a linearly independent subset of $\Zb(\C[G])$.
  %Because Hence $A$ is a subalgebra of $\Zb(\C[G])$.
  By \cite[Theorem 2.2(b)]{diaconis_isaacs}, we know $A$ is a subalgebra of $\Zb(\C[G])$.
  Now the set $\{1\}$ is one of the superclasses,
  and 
  the map $g\mapsto g\inv$ permutes the superclasses
  by \cite[Theorem 2.2(f)]{diaconis_isaacs};
  thus $A$ is an S-ring.  
  
  As the partition $\KK$ can be recovered
  from $A=\spn_\C\{\Khat: K\in\KK\}$
  as the set $\{ K\sseq G: \Khat\in A\mbox{~but~} \hat{L}\not\in A \mbox{~for all~} L\subsetneq K\}$,
  the map $(\XX,\KK)\mapsto \spn_\C\{\Khat: K \in\KK\}$
  is injective.
  It thus remains to show that every S-ring contained in $\Zb(\C[G])$
  corresponds to a supercharacter theory.

  Let $A$ be an S-ring contained in $\Zb(\C[G])$.
  Then by Lemma \ref{lem_subalgyieldsfZs},
  there exists a partition $\XX$ of $\Irr(G)$ such that 
  $\{f_X: X\in\XX\}$ is a basis for $A$,
  where $f_X = \sum_{\chi\in X} e_\chi$.
  Now let $\KK$ be the partition of $G$ into the basic sets of $A$;
  we shall show that $(\XX,\KK)\in\Sup(G)$.
  Since $\{f_X: X\in\XX\}$ and $\{\Khat: K\in\KK\}$
  are both bases of $A$, we have that $|\XX|=\dim A = |\KK|$,
  and it only remains to show that $\sigma_X$ is constant on $K$
  for all $X\in\XX$ and all $K\in\KK$.

  But because $e_\chi = \rec{|G|}\sum_{g\in G}\overline{\s_{\{\chi\}}(g)}\,g$,
  by the linearity of the $\sigma$ operator we have
  \begin{equation*}\label{eqn_fXsXreln}
  f_X   = \summ_{\chi\in X} e_\chi
        = \rec{|G|} \sumg \bar{\s_X(g)} \, g .
  \end{equation*}
  Then because $f_X\in\AA$, 
  the function $g\mapsto \rec{|G|}\bar{\s_X(g)}$ 
  is constant on all $K\in\KK$,
  so $\s_X$ must be constant on all $K\in\KK$ as well.
  Therefore $(\XX,\KK)$ is a supercharacter theory of $G$.\qed
\end{proof}

So the supercharacter theory $(\XX,\KK)$ corresponds to the S-ring $S_\KK$.
It is worth noting that our proof of surjectivity
did not make use of condition (\ref{xinvcondn}) from the definition of S-rings;
hence any subring $A$ of $\Zb(\C[G])$ 
satisfying conditions (a) and (b) of Definition \ref{defn_sring} 
corresponds to a supercharacter theory,
and it follows that $A$ is in fact an S-ring.
Note that requiring an S-ring to lie in the center of the group algebra %$\C[G]$
is the same as requiring all its basic sets to be unions of conjugacy classes.
For this reason, 
often supercharacter constructions often require normality
where the corresponding S-ring constructions do not.
We may also note that the supercharacter theories of an abelian group 
are in one-to-one correspondence with its S-rings.

\section{Lattice theory}
It is known that the S-rings of a group $G$ form a 
lattice with respect to inclusion \cite[Section 2]{muzychuk_ponomarenko}.
The set of supercharacter theories of a group also form a lattice
in the following natural way.
Recall that the set $\Part(S)$ of all partitions of a set $S$ forms a lattice,
in which $\KK \leq \LL$ iff every part of $\KK$ is a subset of some part of $\LL$.
We could therefore make $\Sup(G)$ into a poset
by defining that $(\XX,\KK)\leq (\YY,\LL)$ if $\XX\leq \YY$,
or we could equally well define that $(\XX,\KK)\leq(\YY,\LL)$ if $\KK\leq\LL$.
The purpose of this section is to show that these two definitions are equivalent;
along the way we will explicitly find 
the lattice-theoretic join of two supercharacter theories.

For each partition $\XX$ of $\Irr(G)$, let $\Ab_\XX=\spn\{f_X: X\in\XX\}$.
\begin{lemma}\label{lem_fGalois}
The map $\XX\mapsto \Ab_\XX$ is a bijection 
from $\Part(\Irr(G))$ to the set of subalgebras of $\Zb(\C[G])$.
This map is order-reversing, in the sense that
for all $\XX,\YY\in\Part(\Irr(G))$,
we have $\XX\pleq\YY$ if and only if $\Ab_\YY\sseq \Ab_\XX$.\label{part_fGalois_po}
In particular, $\Ab_{\XX\join\YY} = \Ab_\XX \cap \Ab_\YY$. \label{part_fGalois_XcupY}
\end{lemma}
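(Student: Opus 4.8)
The plan is to derive all three assertions from Lemma \ref{lem_subalgyieldsfZs}, using only elementary linear algebra and then formal lattice theory. First I would observe that $\Ab_\XX$ is genuinely a subalgebra of $\Zb(\C[G])$ for \emph{every} partition $\XX$ of $\Irr(G)$, not merely for those arising from a supercharacter theory: since the $e_\chi$ are pairwise orthogonal idempotents summing to $1$, the elements $f_X=\sum_{\chi\in X}e_\chi$ are again pairwise orthogonal idempotents summing to $1$, and the linear span of such a family is closed under multiplication. Thus $\XX\mapsto\Ab_\XX$ does map $\Part(\Irr(G))$ into the set of subalgebras of $\Zb(\C[G])$. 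Surjectivity is then immediate from Lemma \ref{lem_subalgyieldsfZs}: any such subalgebra $A$ has a partition $\XX$ with $\{f_X:X\in\XX\}$ a basis, so $A=\Ab_\XX$. Injectivity is exactly the uniqueness clause of that lemma.

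Next I would establish the order-reversing property by proving both implications directly. If $\XX\pleq\YY$, then every part $Y\in\YY$ is a disjoint union of parts of $\XX$, so $f_Y=\sum_{X\in\XX,\,X\sseq Y}f_X\in\Ab_\XX$; hence $\Ab_\YY\sseq\Ab_\XX$. For the converse, suppose $\Ab_\YY\sseq\Ab_\XX$, fix $\chi\in\Irr(G)$, and let $X_0\in\XX$ and $Y_0\in\YY$ be the parts containing $\chi$. Writing $f_{Y_0}\in\Ab_\XX$ as a linear combination of the linearly independent $f_X$ and comparing coefficients in the basis $\{e_\chi:\chi\in\Irr(G)\}$ forces $f_{Y_0}=\sum_{X\in S}f_X$ for some subset $S\sseq\XX$, so $Y_0=\bigcup_{X\in S}X$ is a union of parts of $\XX$; in particular $X_0\sseq Y_0$. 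Since $\chi$ was arbitrary, $\XX\pleq\YY$. This is the same bookkeeping that appears in the uniqueness half of Lemma \ref{lem_subalgyieldsfZs}.

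Finally, the identity $\Ab_{\XX\join\YY}=\Ab_\XX\cap\Ab_\YY$ is pure lattice theory once the bijection is in hand. The intersection of two subalgebras of $\Zb(\C[G])$ is again a subalgebra, so by the bijection $\Ab_\XX\cap\Ab_\YY=\Ab_{\mathcal{Z}}$ for a unique partition $\mathcal{Z}$, and it suffices to show $\mathcal{Z}=\XX\join\YY$. On one hand $\Ab_{\mathcal{Z}}\sseq\Ab_\XX$ and $\Ab_{\mathcal{Z}}\sseq\Ab_\YY$ give $\XX\pleq\mathcal{Z}$ and $\YY\pleq\mathcal{Z}$ by the order-reversing property, hence $\XX\join\YY\pleq\mathcal{Z}$; on the other hand $\XX\pleq\XX\join\YY$ and $\YY\pleq\XX\join\YY$ give $\Ab_{\XX\join\YY}\sseq\Ab_\XX$ and $\Ab_{\XX\join\YY}\sseq\Ab_\YY$, so $\Ab_{\XX\join\YY}\sseq\Ab_\XX\cap\Ab_\YY=\Ab_{\mathcal{Z}}$ and therefore $\mathcal{Z}\pleq\XX\join\YY$. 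Thus $\mathcal{Z}=\XX\join\YY$ and the claimed equality follows. I do not anticipate a genuine obstacle here; the only points needing a little care are checking that $\Ab_\XX$ is a subalgebra for partitions not coming from supercharacter theories (handled by orthogonality of the $f_X$) and nailing down the converse direction of the order-reversal, both of which are routine given Lemma \ref{lem_subalgyieldsfZs}.
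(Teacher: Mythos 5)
Your proposal is correct and follows essentially the same route as the paper: the bijection comes from Lemma \ref{lem_subalgyieldsfZs}, the order-reversal is proved by the same idempotent bookkeeping (each $f_Y$ being a sum of the $f_X$ it contains, and conversely), and the identity $\Ab_{\XX\join\YY}=\Ab_\XX\cap\Ab_\YY$ is deduced lattice-theoretically from the order-reversing bijection. The only differences are matters of explicitness---you verify that $\Ab_\XX$ is a subalgebra for an arbitrary partition and spell out the final join/intersection argument, both of which the paper leaves implicit or states tersely.
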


\begin{proof}
  Lemma \ref{lem_subalgyieldsfZs} shows that the map $\XX\mapsto\Ab_\XX$ is invertible.
  Now suppose $\XX\pleq\YY$ and let $Y$ be a part of $\YY$.
  Then $Y$ is the union of some parts of $\XX$,
  so $f_Y$ is the sum of the corresponding idempotents $f_X$.
  Then $f_Y\in\Ab_\XX$; hence $\Ab_\YY\sseq \Ab_\XX$.
  Conversely, suppose $\Ab_\YY\sseq \Ab_\XX$.
  Let $Y$ be a part of $\YY$.
  Then $f_Y$ is an idempotent in $\Ab_\XX$,
  so it is a sum of some of the spanning idempotents $\{f_X: X\in\XX\}$ of $\Ab_\XX$.
  It follows that $Y$ must be a union of parts of $\XX$.
  Thus the map $\XX\mapsto \Ab_\XX$ is an order-reversing bijection, as desired.

  Then because $\XX\join\YY$ is the least upper bound for $\XX$ and $\YY$ in $\Part(\Irr(G))$,
  the subalgebra $\Ab_{\XX\join\YY}$ must be the largest subalgebra
  contained in both $\Ab_\XX$ and $\Ab_\YY$, namely $\Ab_\XX \cap \Ab_\YY.$\qed
  %
  %Similarly for part (\ref{part_fGalois_XcapY}),
  %we observe that $\XX\meet\YY$ is the greatest lower bound for $\XX$ and $\YY$,
  %so $\Ab_{\XX\meet\YY}$ is the smallest subalgebra 
  %containing both $\Ab_\XX$ and $\Ab_\YY$,
  %which by definition is $\gen{\Ab_\XX, \Ab_\YY}$.\qed
\end{proof}

Let $\KK,\LL\in\Part(S)$ for some set $S$.
Note that if $f$ is a function whose domain is $S$
that is constant on each part of $\KK$ and constant on each part of $\LL$,
then $f$ must be constant on each part of the partition $\KK\join\LL$.

\begin{lemma}\label{lem_KhalfGalois}
Let $\KK,\LL\in\Part(G)$.
Then
\begin{equation}\label{eqn_intersectKspans}
\spanof{\Mhat: M\in\KK\join\LL} = \spanof{\Khat: K\in\KK} \cap \spanof{\Lhat: L\in\LL}.
\end{equation}
\end{lemma}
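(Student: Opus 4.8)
The plan is to prove the two inclusions separately, using the lattice remark about functions constant on parts of $\KK$, $\LL$, and $\KK\join\LL$ together with the fact that $\Khat = \sum_{g\in K} g$ has, as its coefficient vector in the group-algebra basis, the indicator function of $K$.

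First I would establish the inclusion $\spanof{\Mhat: M\in\KK\join\LL} \sseq \spanof{\Khat: K\in\KK} \cap \spanof{\Lhat: L\in\LL}$. Since every part $M$ of $\KK\join\LL$ is, by definition of the join, a union of parts of $\KK$, we have $\Mhat = \sum_{K\sseq M, K\in\KK}\Khat \in \spanof{\Khat: K\in\KK}$; by the symmetric argument $\Mhat\in\spanof{\Lhat: L\in\LL}$ as well. This gives the first inclusion immediately.

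For the reverse inclusion, I would take an arbitrary element $a = \sum_{g\in G} a_g\, g$ lying in $\spanof{\Khat:K\in\KK}\cap\spanof{\Lhat:L\in\LL}$. Membership in $\spanof{\Khat:K\in\KK}$ says exactly that the coefficient function $g\mapsto a_g$ is constant on every part of $\KK$, and similarly membership in $\spanof{\Lhat:L\in\LL}$ says $g\mapsto a_g$ is constant on every part of $\LL$. By the observation immediately preceding this lemma, a function constant on the parts of $\KK$ and on the parts of $\LL$ is constant on the parts of $\KK\join\LL$. Hence $g\mapsto a_g$ is constant on each $M\in\KK\join\LL$, say with value $c_M$ on $M$, and therefore $a = \sum_{M\in\KK\join\LL} c_M \Mhat \in \spanof{\Mhat: M\in\KK\join\LL}$. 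This proves the second inclusion, and the two together give the claimed equality.

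I do not expect a genuine obstacle here: the only mild subtlety is making crisp the translation ``lies in $\spanof{\Khat:K\in\KK}$'' $\iff$ ``coefficient vector is constant on parts of $\KK$,'' which holds because the sets $\hat K$ are supported on disjoint blocks and so are linearly independent with $0/1$ coefficient vectors spanning precisely the space of block-constant functions. Once that dictionary is in place the argument is just the partition-lattice remark applied twice. It is worth noting that this lemma is the group-side analogue of Lemma \ref{lem_fGalois}, and it is exactly what will be needed to match up joins of supercharacter theories on the class side with intersections of subalgebras on the character side.
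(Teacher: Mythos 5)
Your proposal is correct and follows essentially the same route as the paper: the easy inclusion via each part of $\KK\join\LL$ being a union of parts of $\KK$ (and of $\LL$), and the reverse inclusion by observing that the coefficient function of an element in the intersection is constant on the parts of $\KK$ and of $\LL$, hence on the parts of $\KK\join\LL$ by the remark preceding the lemma. No gaps.
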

\begin{proof}
%$(\sseq)$
Since each part $M\in\KK\join\LL$ 
is a union of some parts of $\KK$, %\thmref{sorites2} is used here implicitly
the sum $\Mhat$ lies in $\textspanof{\Khat: K\in\KK}$.
Likewise $\Mhat\in\textspanof{\Lhat: L\in\LL}$,
so the left side of (\ref{eqn_intersectKspans}) is contained in the right hand side.

%$(\supseteq)$
On the other hand, each element $d$ on the right side of (\ref{eqn_intersectKspans})
may be written as 
$d = \sum_{K\in\KK} a_K \Khat = \sum_{L\in\LL} b_L \Lhat$ for some coefficients $a_K,b_L\in\C$.
Recall that each element $g\in G$ occurs in exactly one $K$ and in exactly one $L$,
and that $G$ is a basis for $\C[G]$.
Now the function mapping $g$ to the coefficient of $g$ in $d$ is constant on
each $K\in\KK$, and also constant on each $L\in\LL$.
Hence it is constant on each member of $\KK\join\LL$ %by \thmref{fconstonxcupy},
and so $d$ lies in the span of $\{\Mhat: M\in\KK\join\LL\}$. \qed
\end{proof}

These two lemmas allow us to define a lattice-theoretic join operation on
supercharacter theories of a group $G$:

\begin{proposition}\label{prop_joinisthy}
  Let $G$ be a group.
  Let %, and let 
  %$\XX$ and $\YY$ be partitions of $\Irr(G)$
  %and $\KK$ and $\LL$ partitions of $G$
  %such that 
  $(\XX, \KK)$ and $(\YY,\LL)$ be supercharacter theories of $G$.
  Then $(\XX\join \YY\, ,\, \KK\join \LL)$ is also a supercharacter theory of $G$,
  which we denote $(\XX,\KK)\join(\YY,\LL)$.
\end{proposition}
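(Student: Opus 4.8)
The plan is to verify directly the three conditions of Definition \ref{defn_supercharthy} for the pair $(\XX\join\YY,\,\KK\join\LL)$, taking $\chi_{_X}=\s_X$ for each $X\in\XX\join\YY$; the two preceding lemmas supply everything that is needed, so no serious obstacle is expected.

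Condition (3) is immediate. Since $\XX\join\YY$ is a partition of $\Irr(G)$, every $\chi\in\Irr(G)$ lies in some part $X$, and $\chi$ is then a constituent of $\s_X=\sum_{\psi\in X}\psi(1)\psi$; moreover the constituents of $\s_X$ are precisely the members of $X$, so each required character $\chi_{_X}$ exists. Next I would check condition (1), that each $\s_X$ with $X\in\XX\join\YY$ is constant on every part $M\in\KK\join\LL$. Because $X$ is a union of parts of $\XX$, linearity of the $\s$ operator gives $\s_X=\sum\s_{X'}$, the sum running over those $X'\in\XX$ with $X'\sseq X$; as $(\XX,\KK)$ is a supercharacter theory, each $\s_{X'}$ is constant on every $K\in\KK$, hence so is $\s_X$. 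Symmetrically, since $X$ is a union of parts of $\YY$, the function $\s_X$ is constant on every $L\in\LL$. By the observation recorded just before Lemma \ref{lem_KhalfGalois}, a function constant on each part of $\KK$ and on each part of $\LL$ is constant on each part of $\KK\join\LL$; thus $\s_X$ is constant on every $M\in\KK\join\LL$.

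Finally, condition (2) is a dimension count, and this is the only place requiring a moment's care. By Proposition \ref{sringbijection} together with the proof of Lemma \ref{lem_subalgyieldsfZs}, the S-ring $\spanof{\Khat:K\in\KK}$ attached to $(\XX,\KK)$ has $\{f_X:X\in\XX\}$ as a basis, so it equals $\Ab_\XX$; likewise $\spanof{\Lhat:L\in\LL}=\Ab_\YY$. Lemma \ref{lem_KhalfGalois} then yields
$$\spanof{\Mhat:M\in\KK\join\LL}=\Ab_\XX\cap\Ab_\YY,$$
while Lemma \ref{lem_fGalois} gives $\Ab_{\XX\join\YY}=\Ab_\XX\cap\Ab_\YY$ as well. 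The sets $\{\Mhat:M\in\KK\join\LL\}$ and $\{f_X:X\in\XX\join\YY\}$ are each linearly independent (their members have pairwise disjoint supports relative to the basis $G$, resp.\ the basis $\{e_\chi\}$), so each is a basis of the single algebra $\Ab_\XX\cap\Ab_\YY$, whence $|\KK\join\LL|=\dim(\Ab_\XX\cap\Ab_\YY)=|\XX\join\YY|$. All three conditions now hold, so $(\XX\join\YY,\,\KK\join\LL)\in\Sup(G)$.

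One could instead route the argument entirely through the correspondence: $\Ab_\XX\cap\Ab_\YY$ is a subalgebra of $\Zb(\C[G])$ which, by Lemma \ref{lem_KhalfGalois}, has a basis $\{\Mhat:M\in\KK\join\LL\}$ with $\{1\}\in\KK\join\LL$, hence satisfies conditions (a) and (b) of Definition \ref{defn_sring} and so corresponds to a supercharacter theory; reading off the associated partitions via Lemmas \ref{lem_fGalois} and \ref{lem_KhalfGalois} identifies that theory as $(\XX\join\YY,\,\KK\join\LL)$. Either way the content is the same, namely that $\XX\join\YY$ and $\KK\join\LL$ are the two natural bases of the intersection algebra $\Ab_\XX\cap\Ab_\YY$.
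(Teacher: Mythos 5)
Your proof is correct and follows essentially the same route as the paper: constancy of $\s_X$ on $\KK\join\LL$ via the observation that a function constant on the parts of $\KK$ and of $\LL$ is constant on the parts of their join, and the count $|\XX\join\YY|=|\KK\join\LL|$ by identifying both $\{f_X: X\in\XX\join\YY\}$ and $\{\Mhat: M\in\KK\join\LL\}$ as bases of $\Ab_\XX\cap\Ab_\YY$ using Lemmas \ref{lem_fGalois} and \ref{lem_KhalfGalois}. The extra verification of condition (3) and the sketched alternative via the S-ring correspondence are fine but add nothing beyond the paper's argument.
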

\begin{proof}
Let $\ZZ = \XX\join\YY$ and $\MM=\KK\join\LL$.
%Note first that since $\{1\}\in\KK$ and $\{1\}\in\LL$, it follows that $\{1\}\in\MM$.
%
To show that the functions $\{\sigma_Z: Z\in\ZZ\}$
are constant on the sets $M\in\MM$,
let $Z\in\ZZ$, let $M\in\MM$, and let $g,h\in M$.
Now $Z={\bigcup}_{X\in \II} X$ for some subset $\II\sseq\XX$,
so $\s_Z = \sum_{X\in \II} \s_X$ must be constant on each $K\in\KK$
because $(\XX,\KK)$ is a supercharacter theory.
On the other hand, by symmetry
$\s_Z$ is also constant on each $L\in\LL$.
So %by \thmref{fconstonxcupy}, 
it follows that $\s_Z$ is constant on each $M\in\MM$.
It only remains to show that $|\ZZ|=|\MM|$.

Recall %that Diaconis and Isaacs showed \cite[Theorem 2.2(b)]{diaconis_isaacs}
that $\left\{f_X: X \in \XX\right\}$ and $\{\Khat: K\in\KK\}$ 
are two different bases for the same algebra $S_\KK$,
and likewise 
$\left\{f_Y: Y\in\YY\right\}$ and $\{\hat{L}: L\in\LL\}$
are bases for $S_\LL$.
Hence
$$\begin{array}{r@{}c@{}l@{}l}
  \multicolumn{3}{l}{\spanof{f_Z: Z\in\ZZ}} \\
  \quad &{}={}& \spanof{f_X: X\in\XX} \cap \spanof{f_Y: Y\in\YY}  
                                  & \mbox{~(by Lemma \ref{part_fGalois_XcupY})} \\
                      &=& \spanof{\Khat: K\in\KK} \cap \spanof{\Lhat: L\in\LL} \\
                                  %& \mbox{~(by \cite[Theorem 2.2(b)]{diaconis_isaacs})} \\
                      &=& \spanof{\Mhat: M\in\MM}.
                                  & \mbox{~(by Lemma \ref{lem_KhalfGalois})} \\
  \end{array}
  $$
But since both $\{f_Z: Z\in\ZZ\}$ and
$\{\Mhat: M\in\MM\}$ are linearly independent sets over $\C$, 
both $|\ZZ|$ and $|\MM|$ must equal the dimension of the algebra in question,
so $|\ZZ|=|\MM|$ as desired.
We conclude that $(\XX\join\YY\widecomma \KK\join\LL)$ is a supercharacter theory of $G$.\qed
\end{proof}

Taking the join of two supercharacter theory thus corresponds to intersecting their S-rings.
%$\Phi(\CCC\join\DDD) = \Phi(\CCC)\cap \Phi(\DDD)$.
On the other hand, the partition meets $\XX\meet\YY$ and $\KK\meet\LL$ in general do not form a supercharacter theory;
this corresponds to the fact that the subalgebra generated by two S-rings need not itself be a S-ring.

\begin{corollary}\label{prop_XlatticeisKlattice}
Let $(\XX,\KK)$ and $(\YY,\LL)$ be supercharacter theories of $G$.
Then $\XX\pleq \YY$ if and only if $\KK\pleq\LL$.
\end{corollary}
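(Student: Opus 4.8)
The plan is to deduce this at once from Proposition \ref{prop_joinisthy} together with the Diaconis--Isaacs fact (quoted above from \cite[Theorem 2.2(c)]{diaconis_isaacs}) that in a supercharacter theory each of the two partitions uniquely determines the other. The starting observation is purely lattice-theoretic: in any lattice $a\pleq b$ is equivalent to $a\join b=b$. Hence $\XX\pleq\YY$ holds in $\Part(\Irr(G))$ if and only if $\XX\join\YY=\YY$, and $\KK\pleq\LL$ holds in $\Part(G)$ if and only if $\KK\join\LL=\LL$. So it suffices to prove that $\XX\join\YY=\YY$ if and only if $\KK\join\LL=\LL$.

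For the forward direction, suppose $\XX\join\YY=\YY$. By Proposition \ref{prop_joinisthy} the pair $(\XX\join\YY\widecomma\KK\join\LL)=(\YY\widecomma\KK\join\LL)$ is a supercharacter theory of $G$. But $(\YY,\LL)$ is also a supercharacter theory of $G$, and since the partition of $\Irr(G)$ belonging to a supercharacter theory determines its partition of $G$, we conclude $\KK\join\LL=\LL$, i.e. $\KK\pleq\LL$. The converse is symmetric: if $\KK\join\LL=\LL$, then Proposition \ref{prop_joinisthy} shows $(\XX\join\YY\widecomma\LL)$ is a supercharacter theory, and comparing it with $(\YY,\LL)$ and invoking the fact that the partition of $G$ determines the partition of $\Irr(G)$ gives $\XX\join\YY=\YY$, that is, $\XX\pleq\YY$.

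Alternatively, one can argue through the S-ring picture: by the proof of Proposition \ref{sringbijection} we have $\Ab_\XX=\spanof{\Khat:K\in\KK}=S_\KK$ and $\Ab_\YY=S_\LL$, so Lemma \ref{lem_fGalois} gives $\XX\pleq\YY \iff \Ab_\YY\sseq\Ab_\XX \iff S_\LL\sseq S_\KK$, and then a comparison of coefficients of group elements (exactly as in the first half of the proof of Lemma \ref{lem_KhalfGalois}) shows that $S_\LL\sseq S_\KK$ precisely when every part of $\LL$ is a union of parts of $\KK$, i.e. when $\KK\pleq\LL$. I expect there to be essentially no obstacle here, since all the real work has been done in the preceding lemmas and in \cite{diaconis_isaacs}; the only point worth care is that the argument uses the uniqueness statement to run ``backwards,'' inferring equality of partitions from the mere fact that two supercharacter theories share one of their two components.
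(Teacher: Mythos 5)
Your main argument is exactly the paper's proof: apply Proposition \ref{prop_joinisthy} to get the supercharacter theory $(\XX\join\YY\widecomma\KK\join\LL)$, then invoke the Diaconis--Isaacs fact that either partition determines the other to force $\KK\join\LL=\LL$ (respectively $\XX\join\YY=\YY$). The S-ring alternative you sketch is also sound, but it is not needed; the proposal is correct and matches the paper's approach.
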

\begin{proof}
$(\Leftarrow)$
Suppose $\KK\pleq\LL$.
Since $(\XX,\KK)$ and $(\YY,\LL)$ are supercharacter
theories for $G$, so is 
$(\XX\join\YY \widecomma \KK\join\LL)$, which is equal to $(\XX\join\YY \widecomma \LL)$.
But because superclasses and supercharacters determine one another, %by \thmref{fact_KdeterminesX} 
there is only one supercharacter theory with superclasses $\LL$,
namely $(\YY,\LL)$.  So
$\XX\join\YY = \YY$ and $\XX\pleq\YY$ as desired.

$(\Rightarrow)$
Suppose $\XX\pleq\YY$.
Then $(\XX\join\YY \widecomma \KK\join\LL) = (\YY\widecomma \KK\join\LL)$
is a supercharacter theory for $G$.
Because $(\YY,\LL)$ is the unique supercharacter theory with supercharacters from $\YY$,
%By \thmref{fact_XdeterminesK},
we must have $\KK\join\LL=\LL$
and so $\KK\pleq\LL$.
\qed
\end{proof}

We are therefore not breaking symmetry between superclasses and supercharacters
when we define a partial ordering of $\Sup(G)$ as follows. 

\begin{definition}
  Let $(\XX,\KK)$ and $(\YY,\LL)$ be supercharacter theories of a group $G$.
  Then we write $(\XX,\KK)\pleq(\YY,\LL)$ if $\XX\pleq\YY$.
\end{definition}

Since $\Sup(G)$ may be viewed as a subset of the finite lattice $\Part(\KK)$,
it is in fact a lattice itself.
%is a finite poset with both a maximal element (where $\KK=\{\{1\},G-\{1\}\}$) 
%and a minimal element (where $\KK$ is the partition into conjugacy classes),
%$\Sup(G)$ is in fact a lattice.  
Whereas computing $\CCC\join\DDD$ in this lattice can be done by taking partition joins,
in general $\CCC\meet\DDD$ is not readily computable.
Note that the set of S-rings of $G$ lying in $\Zb(\C[G])$,
which is a sublattice of the lattice of S-rings,
is isomorphic to the dual of the supercharacter theory lattice $\Sup(G)$.

\section{$*$-products}

If $N$ is a  normal subgroup of $G$,
then some supercharacter theories of $N$ 
can be combined with
supercharacter theories of $G/N$
to form supercharacter theories of the full group $G$.
We can thus construct supercharacter theories of large groups 
by combining those of smaller groups.
This is the most important special case of 
a more general construction
which will be treated in Section \ref{sect_wtp}.

Let a group $G$ act on another group $H$;
then as discussed in \cite{diaconis_isaacs}, 
there exists a supercharacter theory
$\Conj{G}{H}=(\XX,\KK)\in\Sup(H)$ 
such that $\XX$ is the partition of $\Irr(H)$ into $G$-orbits
and $\KK$ is the finest partition of $H$ into unions of conjugacy classes
such that each part is $G$-invariant.
Then for another supercharacter theory $(\YY,\LL)\in\Sup(H)$,
every part $L\in\LL$ is $G$-invariant if and only if $\KK\pleq\LL$,
which is true if and only if $\XX\pleq\YY$ (by Corollary \ref{prop_XlatticeisKlattice}), 
which is true
if and only if every part $Y\in\YY$ is $G$-invariant.
We may thus unambiguously speak of the \defnstyle{$G$-invariant supercharacter theories} of $H$,
denoted $\Supinv{G}(H)$,
of which $\Conj{G}{H}$ 
is the minimal one.

%\begin{definition}
%  Let $G$ and $H$ be groups and let $G$ act on $H$ by automorphisms.
%  We say that $(\XX,\KK)\in\Sup(H)$ is
%  \hbox{\defnstyle{$\mathbf{G}$-invariant}}
%  if the action of $G$ fixes each part $K\in\KK$ setwise.
%  We denote by $\Supinv{G}(H)$ 
%  the set of all $G$-invariant supercharacter theories of $H$.
%\end{definition}
%
%
%For example, if $N\norm G$,
%then $\CCC\in\Sup(N)$ is $G$-invariant
%if and only if its superclasses are unions of $G$-conjugacy classes.
%Also, every supercharacter theory of $G$ is $G$-invariant, 
%so $\Supinv{G}(G)=\Sup(G)$.
%Note that if $M,N\norm G$ with $N<M$,
%then a supercharacter theory of $M/N$ is $G/N$-invariant if and only if it is $G$-invariant.
%As our notation indicates,
%the supercharacter theory $\Conj{G}{H}$ 
%is the minimal $G$-invariant member of $\Sup(H)$.

We would like to define a product
$$*: \Supinv{G}(N)\by\Sup(G/N)\longrightarrow \Sup(G).$$
So suppose $\CCC=(\XX,\KK)\in\Supinv{G}(N)$ and $\DDD=(\YY,\LL)\in\Sup(G/N)$.
Let us first consider the superclasses: 
$\KK$ is a $G$-invariant partition of $N$
and $\LL$ a $G$-invariant partition of $G/N$, 
one part of which is the coset $N$ as a singleton.
Thus $\LL$ induces a partition $\tw{\LL}$ of $G$,
one part of which will be the set $N$,
which we can then replace with the partition $\KK$ of $N$.
To express this formally, 
for each subset $L\sseq G/N$ let $\tw{L}={\bigcup}_{Ng\in L}Ng$,
and extend this notation to $\LL$ 
by $\tw{\LL}=\{\tw{L}: L\in\LL\}$.
Then for $\KK\in\Part(N)$ and $\LL\in\Part(G/N)$, we have a partition
\begin{equation}\label{eqn_classpart}
  \KK\cup\tw{\LL}-\{N\}\in\Part(G).
\end{equation}

%For the supercharacters,
%Continuing to suppose $(\XX,\KK)\in\Supinv{G}(N)$
%and $(\YY,\LL)\in\Sup(G/N)$,
%we want to construct a partition of $\Irr(G)$
%out of $\XX$ and $\YY$ somehow.
%We can view each irreducible character of $G/N$
%as a character of $G$ by inflation, 
%it is natural to let $\YY$ be a subset of our partition.
%It remains to use each set $X\in\XX$ of irreducible characters of $N$
%to construct a set of irreducible characters of $G$.

For the supercharacters, 
if $N$ is a normal subgroup of $G$ and $\psi\in\Irr(N)$,
let $\Irr(G|\psi)$ denote $\{\chi\in\Irr(G) : [\chi_N,\psi]>0\}$.
If $Z\sseq\Irr(N)$ is a union of $G$-orbits,
then define the subset $Z^G$ of $\Irr(G)$
to be
${\bigcup}_{\psi\in Z} \Irr(G|\psi)$.
Extend this notation to a set $\ZZ$ of such subsets of $\Irr(N)$
by letting $\ZZ^G=\{Z^G: Z\in\ZZ\}$.

Now consider $(\XX,\KK)\in\Sup_G(N)$ and $(\YY,\LL)\in\Sup(G/N)$ as before.
Since $\XX$ is a partition of $\Irr(N)$ into unions of $G$-orbits,
it follows from Clifford theory that $\XX^G$ is a partition of $\Irr(G)$.
Since $\{1_N\}\in\XX$, one part of $\XX^G$ is 
$\{1_N\}^G=\{\chi\in\Irr(G) : N\sseq\ker\chi\}$, 
which we identify with $\Irr(G/N)$ in the usual way.
Thus we can replace that part of $\XX^G$
with the partition $\YY$ of $\Irr(G/N)$,
obtaining a partition of $G$
\begin{equation}\label{eqn_charpart}
  \YY\cup \XX^G - \{\Irr(G/N)\}\in\Part(\Irr(G))
\end{equation}
incorporating information from both $\XX$ and $\YY$.

%Note that if $X$ and $Y$ are disjoint,
%then so are $X^G$ and $Y^G$, 
%since we require $X$ and $Y$ to be unions of $G$-orbits.

We shall show that the partitions of (\ref{eqn_classpart}) and (\ref{eqn_charpart})
do form a supercharacter theory of $G$,
by way of a brief lemma
demonstrating the suitability of the notation ``$X^G$.''

\begin{lemma}\label{lem_calculatesigmaXG}
Let $N\norm G$ and let $X\sseq \Irr(N)$
be a union of $G$-orbits.
Then 
$$\sigma_{(X^G)} = \left(\sigma_X\right)^G.$$
\end{lemma}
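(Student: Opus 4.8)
The plan is to compute both sides as explicit class functions on $G$ and check they agree pointwise. Recall that $\sigma_X = \sum_{\psi \in X} \psi(1)\,\psi$ as a character of $N$, and that $(\sigma_X)^G$ here must mean the class function of $G$ obtained by applying the construction that takes a union of $G$-orbits in $\Irr(N)$ to the corresponding character built from $\Irr(G|\psi)$ — but the cleaner reading, and the one I would adopt, is that $(\sigma_X)^G$ denotes the induced-to-$G$ piece, i.e. we should relate $\sigma_{X^G}$ to an induction or restriction of $\sigma_X$. So the first step is to pin down notation: for $X \subseteq \Irr(N)$ a union of $G$-orbits, $X^G = \bigcup_{\psi\in X}\Irr(G|\psi)$, and I want to show $\sum_{\chi\in X^G}\chi(1)\chi = (\sum_{\psi\in X}\psi(1)\psi)^G$ where the superscript on the right is interpreted via the natural operation making the identity hold — most likely $(\sigma_X)^G = \frac{1}{|?|}(\sigma_X)^G_{\text{ind}}$ up to the normalizing constant from Clifford theory. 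I would first settle this by testing on a single $G$-orbit.

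The key computational tool is Clifford theory together with the fact that $X$ is $G$-invariant. First I would reduce to the case where $X$ is a single $G$-orbit $\{\psi_1,\dots,\psi_t\}$ of $\Irr(N)$, since both sides are additive over disjoint unions of $G$-orbits (the sets $\Irr(G|\psi)$ for $\psi$ in distinct $G$-orbits are disjoint, which is the Clifford-theoretic fact already invoked in the paragraph before the lemma). For a single orbit, all the $\psi_i$ have a common degree $d$, and $\Irr(G|\psi_i) = \Irr(G|\psi_1)$ for all $i$ since conjugate characters of $N$ lie under the same characters of $G$; write this common set as $\Irr(G|\mathcal{O})$. Then for $\chi \in \Irr(G|\mathcal{O})$, Clifford's theorem gives $\chi_N = e\sum_{i=1}^t \psi_i$ for a common ramification index $e$ (common because $G$ permutes the $\psi_i$ transitively and fixes $\chi$), so $\chi(1) = et d$. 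The crucial identity to extract is the relation $\sum_{\chi\in\Irr(G|\mathcal O)}\chi(1)\,\chi_N = \big(\sum_{\chi}\chi(1)e\big)\sum_i\psi_i$, and then matching this against $\sigma_X = d\sum_i\psi_i$ on one side forces the constant, while on the $G$ side $\sigma_{X^G} = \sum_{\chi\in\Irr(G|\mathcal O)}\chi(1)\chi$.

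The step I expect to be the main obstacle is making the constants match up exactly — confirming that $\sum_{\chi\in\Irr(G|\mathcal O)}\chi(1)^2 = |G:N|\cdot t d^2$ (or the appropriate variant), which is the content of the ``sum of squares over a block'' identity and follows from $\sum_{\chi\in\Irr(G|\mathcal O)}\chi(1)\chi_N = |G:N|(\sum_i\psi_i)$ — itself a standard consequence of Frobenius reciprocity applied to $\sum_i(\psi_i)^G = \sum_{\chi\in\Irr(G|\mathcal O)}[\chi_N,\sum_i\psi_i]\chi$ combined with $[\chi_N,\psi_i]=e$ — so that after dividing by the right normalization the character $(\sigma_X)^G$ is literally $\sigma_{X^G}$. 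Once the single-orbit case is verified with the constants tracked correctly, summing over the orbits comprising $X$ finishes the proof; I would close by remarking that this is exactly the compatibility needed so that ``$\sigma_{X^G}$'' is the supercharacter attached to the part $X^G$ of the partition $\XX^G$ in \eqref{eqn_charpart}.
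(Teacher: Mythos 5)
Your overall route---reduce to a single $G$-orbit and verify the identity by explicit Clifford-theoretic bookkeeping---is workable and genuinely different from the paper's argument, but the crux of your plan is exactly where the proposal falls short. First, the lemma asserts equality with \emph{ordinary} induction, on the nose; your opening hedge that $(\sigma_X)^G$ might have to be renormalized by some unspecified constant means you never actually fix the statement being proved, and the proof sketch never closes that loop. Second, the identity you lean on is misstated: for a single orbit $\mathcal{O}=\{\psi_1,\dots,\psi_t\}$ of common degree $d$ one has $\sum_{\chi\in\Irr(G|\mathcal{O})}\chi(1)\chi_N=|G:N|\,d\,\sum_i\psi_i$, not $|G:N|\sum_i\psi_i$; indeed, evaluating your version at $1$ gives $\sum_\chi\chi(1)^2=|G:N|\,t\,d$, contradicting the sum-of-squares formula $|G:N|\,t\,d^2$ you state in the same sentence, so the constant-tracking that you yourself identify as the main obstacle has not actually been carried out. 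The correct finish is short: for $\chi\in\Irr(G|\mathcal{O})$ write $[\chi_N,\psi_i]=e_\chi$ for all $i$, so $\chi(1)=e_\chi t d$; by Frobenius reciprocity the coefficient of $\chi$ in $(\sigma_X)^G=\sum_i d\,(\psi_i)^G$ is $d\sum_i[\chi_N,\psi_i]=d\,t\,e_\chi=\chi(1)$, and no irreducible character outside $\Irr(G|\mathcal{O})$ occurs; hence $(\sigma_X)^G=\sum_{\chi\in X^G}\chi(1)\chi=\sigma_{(X^G)}$ with no normalizing factor, and additivity over the orbits making up $X$ gives the general case.

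For comparison, the paper sidesteps all constant-tracking: writing $\rho_N=\sum_{Z\in\ZZ}\sigma_Z$ over the orbit partition $\ZZ$ of $\Irr(N)$ and inducing gives $\rho_G=\sum_{Z\in\ZZ}(\sigma_Z)^G$; since the characters $(\sigma_Z)^G$ have pairwise disjoint sets of irreducible constituents, each $(\sigma_Z)^G$ must equal the corresponding block of $\rho_G=\sum_{\chi\in\Irr(G)}\chi(1)\chi$, namely $\sigma_{(Z^G)}$. Your computation, once corrected as above, buys a self-contained verification that also records the degree identities; the paper's trick buys brevity and makes it transparent from the outset that no normalizing constant can appear.
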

\begin{proof}
  It suffices to prove the statement when $X$ is a single $G$-orbit.
  Let $\ZZ$ be the partition of $\Irr(N)$ into its $G$-orbits,
  so that $\ZZ^G$ is a partition of $\Irr(G)$.
  Then the regular character 
  %$\rho_N=\sum_{\mathclap{Z\in\ZZ}} \s_Z$ 
  $\rho_N=\sum_{{Z\in\ZZ}} \s_Z$ 
  and so
  $$\rho_G = (\rho_N)^G=\sum_{Z\in\ZZ} (\s_Z)^G.$$
  Now the characters $(\s_Z)^G$ have no irreducible constituents in common
  with one another, 
  so $(\s_X)^G=\s_Y$ where $Y$ is the set of irreducible constituents of $(\s_X)^G$.
  But $Y=X^G$, so we conclude that
  $\s_{(X^G)} = (\s_X)^G$.\qed
\end{proof}

%Using this notation,
%if $\XX$ is a $G$-invariant partition of $\Irr(N)$ and
%$\YY\in\Part(\Irr(G/N))$, then
%\begin{equation}
%  \YY\cup \left(\XX^\circ\right)^G\in\Part(\Irr(G)).
%\end{equation}

\begin{theorem}\label{thm_starproduct}
  Let $G$ be a group and let $N\norm G$.
  Suppose $(\XX,\KK)\in\Supinv{G}(N)$ and suppose $(\YY,\LL)\in\Sup(G/N)$.
  Then
  $$\left(
       \YY\cup \XX^G - \{\Irr(G/N)\},~
       \KK\cup\tw{\LL}-\{N\}
    \right)$$
  is a supercharacter theory of $G$.
\end{theorem}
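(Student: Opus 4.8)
The plan is to verify the three conditions of Definition~\ref{defn_supercharthy} for the pair $(\XX^\star,\KK^\star)$ with $\XX^\star=\YY\cup\XX^G-\{\Irr(G/N)\}$ and $\KK^\star=\KK\cup\tw{\LL}-\{N\}$. First I would record that these are genuine partitions: by Clifford theory $\XX^G$ partitions $\Irr(G)$ and has $\{1_N\}^G=\Irr(G/N)$ as one part, so splicing in the partition $\YY$ of $\Irr(G/N)$ in place of that part yields a partition of $\Irr(G)$; symmetrically $\tw{\LL}$ partitions $G$ with $\tw{\{N\}}=N$ one part, and replacing $N$ by the partition $\KK$ of $N$ yields a partition of $G$. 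Condition~(3) of Definition~\ref{defn_supercharthy} is then immediate, since each $\chi\in\Irr(G)$ lies in exactly one part $Z$ of $\XX^\star$ and so is a constituent of $\sigma_Z$.

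For condition~(2) I would just count. Since $X\mapsto X^G$ and $L\mapsto\tw{L}$ are injective --- distinct unions of $G$-orbits have disjoint nonempty images under $\Irr(G\mid\cdot)$ by Clifford theory, and distinct unions of cosets are plainly distinct --- we get $|\XX^\star|=|\YY|+|\XX|-1$ and $|\KK^\star|=|\KK|+|\LL|-1$. Because $(\XX,\KK)\in\Supinv{G}(N)$ forces $|\XX|=|\KK|$ and $(\YY,\LL)\in\Sup(G/N)$ forces $|\YY|=|\LL|$, the two counts agree.

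The real content is condition~(1): every $\sigma_Z$ with $Z\in\XX^\star$ is constant on every $M\in\KK^\star$. I would split on whether $Z\in\YY$ or $Z=X^G$ for some $X\in\XX$ with $X\neq\{1_N\}$. If $Z\in\YY$, then $\sigma_Z$ is a character of $G$ with $N$ in its kernel, i.e.\ the inflation of a character of $G/N$ that --- since $(\YY,\LL)\in\Sup(G/N)$ --- is constant on each part of $\LL$; hence $\sigma_Z$ is constant on every $\tw{L}$, in particular on every $M\in\tw{\LL}-\{N\}$, and is constant on the coset $N$, hence on every $M\in\KK$. If $Z=X^G$, then $\sigma_Z=(\sigma_X)^G$ by Lemma~\ref{lem_calculatesigmaXG}. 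This is the one place where a small computation is needed: because $X$ is a union of $G$-orbits, $\sigma_X$ is invariant under the conjugation action of $G$ on $N$, so the induced class function $(\sigma_X)^G$ vanishes off $N$ (as $N\norm G$) and restricts to $[G:N]\,\sigma_X$ on $N$. Therefore $\sigma_Z$ is identically $0$ on each $M\in\tw{\LL}-\{N\}\subseteq G\setminus N$, and on each $M\in\KK$ it equals $[G:N]\sigma_X$, which is constant on $M$ since $(\XX,\KK)$ is a supercharacter theory of $N$. That exhausts the cases, so $(\XX^\star,\KK^\star)\in\Sup(G)$.

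The only step that is more than bookkeeping is the behaviour of $(\sigma_X)^G$: one must see that $\sigma_X$ is $G$-conjugation-invariant (immediate, since the sum over a union of $G$-orbits is fixed by $G$ and character degrees are preserved) and then read off from the induction formula that inducing a $G$-invariant class function up from a normal subgroup supports it on $N$ and scales it there by $[G:N]$. The partition checks, the dimension count, and the kernel argument for $Z\in\YY$ are all routine.
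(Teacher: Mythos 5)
Your proposal is correct and follows essentially the same route as the paper's own proof: the same cardinality count $|\YY|+|\XX|-1=|\LL|+|\KK|-1$, the same case split on $Z\in\YY$ versus $Z=X^G$, and the same use of Lemma~\ref{lem_calculatesigmaXG} together with the facts that $(\sigma_X)^G$ vanishes off $N$ and restricts to $[G:N]\sigma_X$ there. The only difference is that you spell out the partition checks and condition~(3) a bit more explicitly than the paper does, which is harmless.
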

\begin{proof}
  Let $\ZZ=\YY\cup \XX^G-\{\Irr(G/N)\}\in\Part(\Irr(G))$
  and let $\MM=\KK\cup\tw{\LL}-\{N\}\in\Part(G)$.
  Now
  $|\ZZ|=|\YY|+|\XX|-1 = |\LL|+|\KK|-1 = |\MM|$,
  so it remains to show that for each part $Z\in\ZZ$,
  the character $\sigma_Z$ is constant on each subset $M\in\MM$.

  One possibility is that $Z$ lies in $\YY$.
  In this case, because $Z\sseq\Irr(G/N)$,
  the character $\sigma_Z$ has $N$ in its kernel,
  so it is certainly constant on every part $K\in\KK$.
  Moreover, because $(\YY,\LL)$ is a supercharacter theory of $G/N$,
  we know that $\sigma_Z$ (viewed as a character of $G/N$)
  is constant on each superclass $L\in\LL$;
  viewed as a character of $G$, 
  it is therefore constant on each $\tw{L}\in\tw{\LL}$.
  So $\sigma_Z$ is constant on each set $M\in\MM$ in the case that $Z\in\YY$.

  The other possibility is that $Z=X^G$ for some part $X\in\XX$.
  Now because $(\XX,\KK)$ is $G$-invariant,
  we know that $X$ is a union
  of $G$-orbits of $\Irr(N)$,
  so we can calculate by Lemma \ref{lem_calculatesigmaXG}
  that $\sigma_Z = \sigma_{(X^G)}=(\sigma_X)^G$.
  Because $N\norm G$,
  we know that $(\s_X)^G$ vanishes outside of $N$;
  therefore $\s_Z$ is constant on every set $\tw{L}\in\tw{\LL}-\{N\}$.
  Moreover,
  because the set $X$ is $G$-invariant, 
  the character $\s_X$ of $N$ is invariant under the action of $G$,
  so $(\s_Z)_N=((\s_X)^G)_N=\ix{G}{N} \s_X$.
  Because $\s_X$ is constant on every part $K\in\KK$,
  so too is $\s_Z$.

  Thus $\s_Z$ is constant on $M$ for every $Z\in\ZZ$ and every $M\in\MM$,
  so we conclude that $(\ZZ,\MM)$ is a supercharacter theory of $G$.\qed
\end{proof}

We call the supercharacter theory of $G$ 
constructed in the preceding theorem
the \defnstyle{$*$-product} of $(\XX,\KK)$ and $(\YY,\LL)$ and write it
$(\XX,\KK)*(\YY,\LL)$.

%It is easy to see that the $*$-product is associative;
%that is, if $N\leq M$ are normal subgroups of $G$
%and $\CCC\in\Sup_G(N)$, $\DDD\in\Sup_G(M/N)$, and $\EEE\in\Sup(G/M)$,
%then $(\CCC*\DDD)*\EEE = \CCC * (\DDD*\EEE)$.
%
%***Unique factorization?***

\section{Superinduction}

Before proceeding further, we need an analogue to induction
for supercharacters.
Let $H$ be a subgroup of $G$.
If $\CCC\in\Sup(G)$,
it may be that  $H$ has some supercharacter theory 
naturally related to $\CCC$.
For example, Diaconis and Isaacs used two-sided orbits to define
a standard supercharacter theory for algebra groups.
Thus the same construction yields supercharacter theories
both for an algebra group $G=1+\mathfrak{n}$ 
 and for an algebra subgroup $H=1+\mathfrak{m}$,
where $\mathfrak{m}\sseq\mathfrak{n}$.
For a supercharacter $\phi$ of $H$, however,
there is no guarantee that the induced character $\phi^G$ will be
a superclass function of $G$.
This is remedied in \cite{diaconis_isaacs}
by the invention of a ``superinduction''
for algebra groups,
which was further studied in \cite{thiem_marberg}.
We here generalize superinduction to arbitrary supercharacter theories.
\begin{definition}\label{defn_superinduction}
  Fix a supercharacter theory of a group $G$.
  Let $H$ be a subgroup of $G$ and let $\phi: H\to\C$ be a function.
  Then the \defnstyle{superinduced function} $\phi^{(G)}: G\to\C$ is defined by
  $$\phi^{(G)}(x) = \ix{G}{H} \rec{|[x]|} \sum_{y\in [x]} \phi^0 (y),$$
  where $[x]$ denotes the superclass containing $x$
  and $\phi^0(y)$ equals $\phi(y)$ if $y\in H$, but equals zero otherwise.
\end{definition}

Thus $\phi^{(G)}(x)$ is the average value of $\phi$ on the superclass containing $x$,
multiplied by $\ix{G}{H}$.  
It is clear that $\phi^{(G)}$ is therefore a superclass function,
although it need not be a character.
Note that Definition \ref{defn_superinduction}
agrees both with 
ordinary induction, if the superclasses are simply the conjugacy classes,
and with the definition of \cite{diaconis_isaacs}, 
if we are working with the standard supercharacter theory of algebra groups.
We will need the analogues for superinduction of two elementary results about ordinary induction.

\begin{lemma}[Frobenius Reciprocity]
  Let $H$ be a subgroup of a group $G$, and fix a supercharacter theory of $G$.
  Let $\phi$ be a class function of $H$ and $\theta$ a superclass function of $G$.
  Then $[\phi^{(G)},\theta] = [\phi,\theta_H]$.
\end{lemma}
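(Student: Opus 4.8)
The plan is to imitate the classical proof of Frobenius reciprocity: expand each side from the definition of the inner product and check that the two expressions agree. First I would write
$$[\phi^{(G)},\theta] = \rec{|G|}\sum_{x\in G}\phi^{(G)}(x)\,\overline{\theta(x)}$$
and substitute the formula from Definition \ref{defn_superinduction}. Since $\ix{G}{H}/|G| = \rec{|H|}$, and since $\theta$, being a superclass function, is constant on $[x]$ (so $\overline{\theta(x)}$ may be pulled inside the average $\rec{|[x]|}\sum_{y\in[x]}\phi^0(y)$ and rewritten as $\overline{\theta(y)}$), this becomes
$$[\phi^{(G)},\theta] = \rec{|H|}\sum_{x\in G}\rec{|[x]|}\sum_{y\in[x]}\phi^0(y)\,\overline{\theta(y)}.$$

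Next I would collapse the outer sum by grouping the $x$'s according to their superclass. For a fixed superclass $K$ the inner expression $\rec{|[x]|}\sum_{y\in[x]}\phi^0(y)\overline{\theta(y)}$ takes the same value $\rec{|K|}\sum_{y\in K}\phi^0(y)\overline{\theta(y)}$ for every $x\in K$, and there are exactly $|K|$ such $x$, so the factor $\rec{|K|}$ cancels and $K$ contributes $\sum_{y\in K}\phi^0(y)\overline{\theta(y)}$. As the superclasses partition $G$, summing over all of them yields $[\phi^{(G)},\theta] = \rec{|H|}\sum_{y\in G}\phi^0(y)\,\overline{\theta(y)}$. Finally, since $\phi^0$ vanishes off $H$ and restricts to $\phi$ on $H$, the sum may be taken over $H$ alone, giving $\rec{|H|}\sum_{y\in H}\phi(y)\overline{\theta(y)} = [\phi,\theta_H]$.

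I expect no genuine obstacle here; the computation is pure bookkeeping. The only points needing care are to invoke, at the right moments, that $\theta$ is constant on superclasses (used to move $\overline{\theta}$ inside the average over $[x]$) and that the superclasses form a partition of $G$ (used to recombine the double sum into a single sum over $G$). It is worth remarking that the proof never uses the hypothesis that $\phi$ is a class function of $H$ --- superinduction and this reciprocity formula make sense for an arbitrary function $\phi$ on $H$ --- and that the fact, observed after Definition \ref{defn_superinduction}, that $\phi^{(G)}$ is itself a superclass function is not required for the calculation, only for the statement to be naturally placed among class-function inner products.
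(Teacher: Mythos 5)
Your computation is correct and is essentially the paper's own proof: expand the inner product, substitute the definition of superinduction, use that $\theta$ is constant on superclasses, regroup the double sum by superclasses, and use that $\phi^0$ vanishes off $H$. Your closing observation that the hypothesis on $\phi$ is never used is accurate but does not change the argument.
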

\begin{proof}
  Let $\KK$ be the set of superclasses of $G$.  We calculate that
  \begin{eqnarray*}
    [\phi^{(G)},\theta] &=& \rec{|G|} \sum_{x\in G} \phi^{(G)}(x) \bar{\theta(x)} \\
                        &=& \rec{|G|} \sum_{x\in G} \ix{G}{H} \rec{|[x]|} \sum_{y\in[x]} \phi^0(y) \bar{\theta(x)} \\
                        &=& \rec{|H|} \sum_{x\in G} \rec{|[x]|} \sum_{y\in[x]} \phi^0(y) \bar{\theta(y)} \\
                        &=& \rec{|H|} \sum_{K\in\KK} \sum_{y\in K\cap H} \phi(y) \bar{\theta(y)} \\
                        &=& \rec{|H|} \sum_{y\in H} \phi(y) \bar{\theta(y)} \\
                        &=& [\phi,\theta_H].\qquad \Box
  \end{eqnarray*}
\end{proof}

In general $\phi^{(G)}$ need not be a character, even if $\phi$ is a character (see \cite{diaconis_isaacs} for an example).
If we start with a character $\chi$ of $G$, however, then for each supercharacter $\s_X$
$$[(\chi_N)^{(G)}, \s_X ] = [\chi_N, (\s_X)_N]$$
is a nonnegative integer; 
thus $(\chi_N)^{(G)}$ is indeed a character.

Just as normal subgroups are those subgroups which are unions of conjugacy classes,
so those subgroups which are unions of superclasses play an analogous role for superinduction.
Let $\CCC=(\XX,\KK)\in\Sup(G)$, which corresponds to the Schur ring $S_\KK$.
Suppose $N$ is a subgroup of $G$ that is a union of superclasses;
then we say $N$ is \defnstyle{$\CCC$-normal}.
Such an $N$ has also been called 
an \defnstyle{$S_\KK$-subgroup} \cite{muzychuk_ponomarenko, leung_man1996}
or a \defnstyle{supernormal} subgroup \cite{marberg_normality}.

\begin{lemma}\label{lem_superinducethenrestrict}
  Let $G$ be a group and $\CCC\in\Sup(G)$.
  Let $N\leq G$ be $\CCC$-normal,
  and let $\phi: N\to\C$ be a function
  constant on those superclasses that lie in $N$.
  Then $(\phi^{(G)})_N = \ix{G}{N} \phi$.
\end{lemma}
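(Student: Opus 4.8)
The plan is to compute $(\phi^{(G)})_N(x)$ directly from Definition~\ref{defn_superinduction} for an arbitrary $x\in N$ and show it collapses to $\ix{G}{N}\phi(x)$. Fix $x\in N$ and let $[x]$ denote the superclass of $G$ containing $x$. By definition,
$$(\phi^{(G)})_N(x) = \ix{G}{N}\,\rec{|[x]|}\sum_{y\in[x]}\phi^0(y).$$
The key observation is that, since $N$ is $\CCC$-normal, $N$ is a union of superclasses; because $x\in N$, the entire superclass $[x]$ is contained in $N$. Hence $\phi^0(y)=\phi(y)$ for every $y\in[x]$ — the "else zero" clause in the definition of $\phi^0$ never triggers — and the sum becomes $\sum_{y\in[x]}\phi(y)$.

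Now I invoke the hypothesis that $\phi$ is constant on those superclasses lying inside $N$: since $[x]$ is such a superclass, $\phi(y)=\phi(x)$ for all $y\in[x]$, so $\sum_{y\in[x]}\phi(y) = |[x]|\,\phi(x)$. Substituting, the factor $|[x]|$ cancels and we obtain $(\phi^{(G)})_N(x)=\ix{G}{N}\phi(x)$, as claimed.

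There is no real obstacle here; the statement is essentially an unwinding of the definitions, and the only thing to be careful about is the twofold use of $\CCC$-normality of $N$ together with the fact that $x\in N$: first to guarantee that the averaging in the definition of superinduction takes place entirely within $N$ (so $\phi^0$ restricted to $[x]$ coincides with $\phi$), and second, via the constancy hypothesis on $\phi$, to evaluate that average as $\phi(x)$ itself. One could remark afterward that this is the superinduction analogue of the elementary fact that for a normal subgroup $N$ and a class function $\phi$ of $N$ that happens to be $G$-invariant, $(\phi^G)_N = \ix{G}{N}\phi$; here "$G$-invariant class function" is replaced by "function constant on the superclasses inside $N$."
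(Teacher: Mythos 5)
Your proof is correct and follows essentially the same route as the paper: a direct unwinding of the definition of superinduction, using $\CCC$-normality to see that $[x]\subseteq N$ (so $\phi^0=\phi$ on $[x]$) and the constancy hypothesis to evaluate the average as $\phi(x)$. The paper compresses these two observations into one line, but the argument is the same.
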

\begin{proof}
  Let $n\in N$.
  Then since $N$ is $\CCC$-normal, $\phi^0(y)=\phi(n)$ for all $y$ in the superclass of $n$.  Thus
  \begin{eqnarray*}
    \left(\phi^{(G)}\right)_N(n) = \ix{G}{N} \rec{|[n]|} \sum_{y\in [n]} \phi^0(y) 
                                 &=& \ix{G}{N} \rec{|[n]|} \sum_{y\in [n]} \phi(n) \\
                                 &=& \ix{G}{N} \phi(n).\qquad\Box
  \end{eqnarray*}
\end{proof}

We pause to note that
$\CCC$-normality can also be described in terms of characters.
Let $N\norm G$ and consider the supercharacter theory $\M(N)*\M(G/N)$,
where $\M(H)$ denotes the maximal supercharacter theory of a group $H$.
There are three superclasses, namely $\{1\}$, $N-\{1\}$, and $G-N$;
the corresponding partition of $\Irr(G)$
is $\{\{1_G\},\Irr(G/N)-\{1_G\},\Irr(G)\setminus\Irr(G/N)\}$.
For each $\CCC=(\XX,\KK)\in\Sup(G)$;
we see that $N$ is $\CCC$-normal 
if and only if $\CCC\pleq\M(N)*\M(G/N)$,
which is true if and only 
$\Irr(G/N)$ is the union of some members of $\XX$.\label{disc_altCnormalX}

\section{Restricting theories}

As noted in \cite[Section 3.2]{muzychuk_ponomarenko},
Schur rings of an $S_\KK$-subgroup $N$ and the quotient group $G/N$ can be defined
using the partitions
$\KK_N = \{K\in\KK : K\sseq N\}$
and $\KK/N=\{KN/N: K\in\KK\}$;
this latter is guaranteed to be a partition of $G/N$ by the following:
\begin{lemma}[\cite{leung_ma}, Lemma 1.2(ii)]\label{leungmalemma}
  Let $S_\KK$ be an S-ring over $G$ with Schur partition $\KK$,
  and let $N$ be a normal $S_\KK$-subgroup of $G$.
  If $K,L\in\KK$, then either $KN/N \cap LN/N=\emptyset$ or $KN/N=LN/N$.
\end{lemma}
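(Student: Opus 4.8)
The plan is to show first that the full preimage $LN\sseq G$ of the block $LN/N$ is a union of basic sets of $S_\KK$, and then to deduce that two such preimages which overlap in $G/N$ must coincide.

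Since $N$ is an $S_\KK$-subgroup, the element $\hat N=\sum_{K\in\KK,\,K\sseq N}\hat K$ lies in $S_\KK$, and therefore so does the product $\hat L\hat N$ for every $L\in\KK$. Expanding $\hat L\hat N=\sum_{l\in L,\,n\in N}ln$, the coefficient of an element $x\in G$ equals $|\{(l,n):ln=x\}|=|L\cap xN|$, which is nonzero exactly when $x\in LN$; hence the support of $\hat L\hat N$ is precisely $LN$. On the other hand, writing $\hat L\hat N$ in the basis $\hat{\KK}$ — whose members have pairwise disjoint supports — shows that the support of any element of $S_\KK$ is a union of basic sets. Comparing the two descriptions, $LN$, and by the same argument $KN$, is a union of basic sets of $S_\KK$.

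Now suppose $KN/N\cap LN/N\neq\emptyset$, say $kN=lN$ with $k\in K$ and $l\in L$. Then $k\in lN\sseq LN$; since $LN$ is a union of basic sets and $k$ lies in the basic set $K$, the whole of $K$ is contained in $LN$, whence $KN\sseq(LN)N=LN$. The symmetric argument gives $LN\sseq KN$, so $KN=LN$ and therefore $KN/N=LN/N$.

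The only real content is getting $LN$ and $KN$ to be unions of basic sets, which rests on nothing more than closure of $S_\KK$ under multiplication together with disjointness of the supports of the basic sets; the remaining coset manipulations are routine, so I do not expect a genuine obstacle. One need only be mildly careful to observe that normality of $N$ is used solely so that $G/N$ is a group, while the subgroup property of $N$ enters just once, in the identity $(LN)N=LN$.
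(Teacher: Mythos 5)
Your proof is correct. Note that the paper itself gives no argument for this lemma---it is quoted from Leung and Ma---so there is nothing internal to compare against; your route (the support of $\hat{L}\hat{N}\in S_\KK$ is exactly $LN$, supports of elements of $S_\KK$ are unions of basic sets, hence $KN$ and $LN$ coincide once they share an $N$-coset) is the standard Schur-ring argument and is sound as written.
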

We shall see that the corresponding supercharacter theories 
exhibit a nice symmetry between superclasses and supercharacters,
but first we need to develop a little notation.

If $Z\sseq\Irr(G)$,
let $f(Z)$ denote the set of all 
irreducible constituents of $(\s_Z)_N$.
Note that if $Z$ is a union of sets of the form $\Irr(G|\psi)$
for various $\psi\in\Irr(N)$,
then $(f(Z))^G=Z$.

%Moreover, let $\Irr(G|\psi)$ denote $\{\chi\in\Irr(G): [\chi_N,\psi]>0\}$
%and define a supercharacter theory as follows:
%$$\mm{N}{G} := \big( \begin{array}[t]{l}
%                       \{\{\chi\}: \chi\in\Irr(G/N)\} \cup \{\Irr(G|\psi): \psi\in\Irr(N)\} \\
%                       \{x^G: x\in N\} \cup \{Nx: x\in G-N\} \big).
%                     \end{array}$$
%It is routine to verify that $\mm{N}{G}\in\Sup(G)$.***

\begin{lemma}\label{lem_fXsdisjoint}
  Let $G$ be a group, let $\CCC=(\XX,\KK)\in\Sup(G)$, and let $N$ be a $\CCC$-normal subgroup of $G$.
  Then for every $X,Y\in\XX$, either $f(X)\cap f(Y)=\emptyset$ or $f(X)=f(Y)$.
\end{lemma}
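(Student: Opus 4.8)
I would prove the stronger fact that each $f(X)$ is a single part of a fixed partition of $\Irr(N)$, from which ``equal or disjoint'' is immediate; the partition in question comes from restricting $\CCC$ to $N$. Since $N$ is $\CCC$-normal it is a union of superclasses, so $\KK_N:=\{K\in\KK:K\sseq N\}$ is a partition of $N$; as $N\norm G$, each $K\in\KK_N$ is both $G$-invariant and a union of $N$-conjugacy classes, and since no superclass straddles $N$ we have $B:=\spn_\C\{\Khat:K\in\KK_N\}=S_\KK\cap\C[N]$, which is therefore a subalgebra of $\Zb(\C[N])$. By Lemma~\ref{lem_subalgyieldsfZs} there is a partition $\mathcal{W}$ of $\Irr(N)$ such that $\{f_W^{(N)}:W\in\mathcal{W}\}$ is a basis of $B$, where $f_W^{(N)}=\sum_{\psi\in W}e_\psi^{(N)}$ and $e_\psi^{(N)}$ is the central idempotent of $\C[N]$ afforded by $\psi$. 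Because conjugation by $G$ fixes each $\Khat$ with $K\in\KK_N$, it fixes $B$ pointwise and hence fixes each primitive idempotent $f_W^{(N)}$ of $B$; since $g$ carries $e_\psi^{(N)}$ to $e_{\psi^g}^{(N)}$, it follows that every part $W\in\mathcal{W}$ is $G$-invariant, i.e.\ a union of $G$-orbits.

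Next I would show that each $f(X)$ is a union of parts of $\mathcal{W}$. Since $\sigma_X$ is a superclass function of $G$, the character $(\sigma_X)_N$ is constant on every superclass contained in $N$, so the element $\rec{|N|}\sum_{n\in N}\overline{\sigma_X(n)}\,n$ of $\Zb(\C[N])$ lies in $B$. By the formula for $e_\psi^{(N)}$ this element equals $\sum_{\psi\in\Irr(N)}\frac{[(\sigma_X)_N,\psi]}{\psi(1)}\,e_\psi^{(N)}$, so, comparing it with the basis $\{f_W^{(N)}\}$ of $B$, the function $\psi\mapsto[(\sigma_X)_N,\psi]/\psi(1)$ is constant on each part of $\mathcal{W}$; its support, which is exactly $f(X)$, is therefore a union of parts of $\mathcal{W}$.

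Now I would pass back to $\C[G]$. Clifford's theorem gives that the irreducible constituents of $\chi_N$ (for $\chi\in\Irr(G)$) form a single $G$-orbit, and that for any $G$-orbit $\mathcal{O}$ the element $\sum_{\psi\in\mathcal{O}}e_\psi^{(N)}$ equals the central idempotent $\sum_{\chi\in\Irr(G|\mathcal{O})}e_\chi$ of $\C[G]$ (both project onto the sum of the irreducible $\C[G]$-modules lying over $\mathcal{O}$). Summing over the orbits contained in a part $W\in\mathcal{W}$ shows $f_W^{(N)}=\sum_{\chi\in\Irr(G|W)}e_\chi$ in $\C[G]$, where $\Irr(G|W):=\bigcup_{\psi\in W}\Irr(G|\psi)$; since $f_W^{(N)}\in B\sseq S_\KK$ is an idempotent, it is a sum of some of the basic idempotents of $S_\KK$, so $\Irr(G|W)$ is a union of parts of $\XX$. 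Finally, given $X\in\XX$, pick $\psi\in f(X)$ (nonempty, since $(\sigma_X)_N\neq 0$) and let $W$ be its part of $\mathcal{W}$, so $W\sseq f(X)$ by the second paragraph. Some $\eta\in X$ has $\psi$ among the constituents of $\eta_N$, so $X$ meets $\Irr(G|\psi)\sseq\Irr(G|W)$; as $\Irr(G|W)$ is a union of parts of $\XX$, we get $X\sseq\Irr(G|W)$. Then for any $\psi'\in f(X)$ there is $\chi\in X$ with $\psi'$ a constituent of $\chi_N$; the constituents of $\chi_N$ form a single $G$-orbit which, because $\chi\in\Irr(G|W)$ and $W$ is a union of $G$-orbits, lies inside $W$, so $\psi'\in W$. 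Hence $f(X)\sseq W$ and thus $f(X)=W$. Since every $f(X)$ is a part of the partition $\mathcal{W}$, any two of them are equal or disjoint.

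The main obstacle, I expect, is isolating this intermediate partition $\mathcal{W}$ and proving the two bridging facts — that $f(X)$ is a union of parts of $\mathcal{W}$ (so that $f_W^{(N)}$ genuinely lies in the Schur ring for each $W\sseq f(X)$) and that $\Irr(G|W)$ is a union of parts of $\XX$. Once these are in place, identifying $f(X)$ with a single part of $\mathcal{W}$ is a short Clifford-theoretic argument, and the whole proof rests on the hypothesis that $N$ is $\CCC$-normal, used precisely to make $\KK_N$ a partition of $N$ and $B$ a subalgebra of $\Zb(\C[N])$.
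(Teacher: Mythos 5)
Your proof is correct, but it takes a genuinely different route from the paper's. The paper argues directly with the superinduction machinery it has just set up: if $f(X)\cap f(Y)\neq\emptyset$ then $[(\s_X)_N,(\s_Y)_N]>0$, so by Frobenius reciprocity $[((\s_X)_N)^{(G)},\s_Y]>0$; since $((\s_X)_N)^{(G)}$ is a superclass function and supercharacters have disjoint supports, every constituent of $\s_Y$ is a constituent of $((\s_X)_N)^{(G)}$, and then Lemma \ref{lem_superinducethenrestrict} (restriction of the superinduced character is $\ix{G}{N}(\s_X)_N$) plus Clifford theory puts each $\alpha\in f(Y)$ into $f(X)$, with symmetry finishing the proof. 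You bypass superinduction entirely and work on the Schur-ring side: $\CCC$-normality gives the subalgebra $B=S_\KK\cap\C[N]$ of $\Zb(\C[N])$, Lemma \ref{lem_subalgyieldsfZs} applied to $N$ yields the canonical ($G$-invariant) partition $\mathcal{W}$ of $\Irr(N)$, and your two bridging facts --- the coefficient computation showing $f(X)$ is a union of parts of $\mathcal{W}$, and the Clifford-theoretic idempotent identity $\sum_{\psi\in\mathcal{O}}e^{(N)}_\psi=\sum_{\chi\in\Irr(G|\mathcal{O})}e_\chi$ for a $G$-orbit $\mathcal{O}$, which forces $\Irr(G|W)$ to be a union of parts of $\XX$ --- pin each $f(X)$ down as a single part of $\mathcal{W}$. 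All the steps check out (in particular $B$ contains $\hat{\{1\}}=1$, so the lemma applies, and the idempotent identity is standard). What the paper's route buys is brevity: a few lines reusing results already proved for superinduction. What yours buys is a stronger structural conclusion: you identify $\{f(X):X\in\XX\}$ with the partition of $\Irr(N)$ canonically attached to the S-ring $S_\KK\cap\C[N]$, which in effect anticipates the subsequent observations that $\{f(X):X\in\XX\}$ partitions $\Irr(N)$ and that the character side of $\CCC_N$ is forced, at the cost of a noticeably longer argument.
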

\begin{proof}
  Suppose $f(X)\cap f(Y)\neq\emptyset$;
  then $[(\s_X)_N, (\s_Y)_N ]>0$.
  By Frobenius reciprocity on superinduction, we then have that
  $[((\s_X)_N)^{(G)}, \s_Y ] >0$.
  But $((\s_X)_N)^{(G)}$ is a superclass function of $G$,
  and hence a linear combination of supercharacters.
  Since the supercharacters have disjoint supports,
  it follows that every irreducible constituent of $\s_Y$
  is also an irreducible constituent of $((\s_X)_N)^{(G)}$.
  %it follows that $[((\s_X)_N)^{(G)}, \beta]>0$
  %for every irreducible constituent $\beta$ of $\s_Y$.
  
  Now let $\alpha\in f(Y)$. 
  Then there exists some $\beta\in\Irr(G|\alpha)$ that is an irreducible constituent of $\s_Y$,
  so $\beta$ is also an irreducible constituent of $((\s_X)_N)^{(G)}$.
  But then by Clifford theory, 
  $\alpha$ must be a constituent of
  $$\left(((\s_X)_N)^{(G)}\right)_N = \ix{G}{N} (\s_X)_N$$
  (by Lemma \ref{lem_superinducethenrestrict}).
  Thus $\alpha$ must be an irreducible constituent of $(\s_X)_N$
  and so $\alpha\in f(X)$.
  Hence $f(Y)\sseq f(X)$; 
  but by symmetry $f(X)\sseq f(Y)$ as well, completing the proof.\qed
\end{proof}
%\begin{proof}
%  Suppose $f(X)\cap f(Y)\neq\emptyset$;
%  then $[(\s_X)_N, (\s_Y)_N ]>0$.
%  By Frobenius reciprocity on superinduction, we then have that
%  $[((\s_X)_N)^{(G)}, \s_Y ] >0$.
%  But $((\s_X)_N)^{(G)}$ is a superclass function of $G$,
%  and hence a linear combination of supercharacters.
%  Since the supercharacters have disjoint supports,
%  it follows that $[((\s_X)_N)^{(G)}, \beta]>0$
%  for every irreducible constituent $\beta$ of $\s_Y$.
%  
%  Now let $\alpha\in f(Y)$. 
%  Then there exists some $\beta\in\Irr(G|\alpha)$ that is an irreducible constituent of $\s_Y$.
%  But then $[((\s_X)_N)^{(G)},\beta]>0$, so 
%  by \thmref{lem_superinducethenrestrict} it follows that
%  $$[(\s_X)_N,\beta_N] = \rec{\ix{G}{N}} \left[(((\s_X)_N)^{(G)})_N, \beta_N\right]>0 .$$
%  Hence by Clifford theory some conjugate of $\alpha$ 
%  must be an irreducible constituent of $(\s_X)_N$;
%  but since $(\s_X)_N$ is $G$-invariant,
%  it follows that $\alpha$ is an irreducible constituent of $(\s_X)_N$
%  and thus $\alpha\in f(X)$.
%  Hence $f(Y)\sseq f(X)$; 
%  but by symmetry $f(X)\sseq f(Y)$ as well, completing the proof.\qed
%\end{proof}

Because every irreducible character of $N$ lies in some $f(X)$,
it follows that $\{f(X): X\in\XX\}$ is a partition of $\Irr(N)$.
We are now ready to define the restrictions of $\CCC$ to normal subgroups and quotients.

\begin{definition}\label{defn_restrictdownandup}
  Let $G$ be a group, let $\CCC=(\XX,\KK)\in\Sup(G)$,
  and let $N$ be a $\CCC$-normal subgroup of $G$.
  Defining $f$ as above, 
  let
  $$\CCC_N = 
    \big( \{f(X): X\in\XX\}
           \widecomma
           \{K\in\KK: K\sseq N\} \big)$$
  and
  $$\CCC^{G/N} = 
    \big( \{X\in\XX: X\sseq\Irr(G/N)\}
           \widecomma
           \{KN/N: K\in\KK\} \big).$$
\end{definition}

To prove that these ordered pairs are in fact supercharacter theories,
we shall adjust $\CCC$ to a different theory slightly above it in the lattice $\Sup(G)$.
Let $\m(G/N)$ denote the minimal supercharacter theory in the lattice $\Sup(G/N)$,
namely the ordinary character theory of $G/N$,
and let $\mm{N}{G}$ denote the supercharacter theory $\Conj{G}{N}*\m(G/N)\in\Sup(G)$.
Then the superclasses of $\mm{N}{G}$ are those conjugacy classes of $G$ which lie in $N$,
together with the nontrivial conjugacy classes of $G/N$ pulled back to $G$.
%Note that within the partition lattice of $G$,
%this is the finest partition of $G$ into unions of conjugacy classes
%such that each superclass outside $N$ is a union of $N$-cosets.
In the corresponding partition of $\Irr(G)$, 
the characters in $\Irr(G/N)$ are in singleton parts,
while every part outside $\Irr(G/N)$
is of the form $\Irr(G|\psi)$ for some $\psi\in\Irr(N)$.

In the proof of the following proposition, let us say ``$\XX$ is constant on $\KK$'' to mean
that for each $X\in\XX$, the character $\s_X$ is constant on every part $K\in\KK$.

\begin{proposition}\label{lem_restrictworks}
  Let $N$ be a subgroup of a group $G$, 
  let $\CCC\in\Sup(G)$, and suppose $N$ is $\CCC$-normal.
  Then $\CCC_N$ is a $G$-invariant supercharacter theory of $N$
  and $\CCC^{G/N}$ is a supercharacter theory of $G/N$.
  Moreover, $$\CCC_N*\CCC^{G/N}=\CCC\join\mm{N}{G}.$$
\end{proposition}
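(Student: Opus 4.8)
The plan is to prove the three assertions of Proposition~\ref{lem_restrictworks} in sequence, deriving the first two as corollaries of the third whenever possible. First I would check the easy counting and invariance facts: $\{K\in\KK:K\sseq N\}$ is a partition of $N$ because $N$ is $\CCC$-normal (a union of superclasses), and $\{f(X):X\in\XX\}$ is a partition of $\Irr(N)$ by Lemma~\ref{lem_fXsdisjoint} together with the observation that every $\alpha\in\Irr(N)$ lies in some $f(X)$. The $G$-invariance of $\CCC_N$ should follow from the fact that conjugation by $G$ permutes the superclasses of $G$, hence permutes those lying in $N$, so $\KK_N$ is $G$-invariant; by Corollary~\ref{prop_XlatticeisKlattice} applied inside $\Sup(N)$, or rather by the characterization of $\Supinv{G}(N)$ via $\Conj{G}{N}\pleq(\cdot)$, this forces $\{f(X):X\in\XX\}$ to be a union of $G$-orbits as well. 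Dually, the parts $KN/N$ partition $G/N$ by Lemma~\ref{leungmalemma}, and $\{X\in\XX:X\sseq\Irr(G/N)\}$ is a partition of $\Irr(G/N)$ because $N$ being $\CCC$-normal means (by the discussion on page~\pageref{disc_altCnormalX}) that $\Irr(G/N)$ is a union of members of $\XX$.

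The heart of the argument is the identity $\CCC_N*\CCC^{G/N}=\CCC\join\mm{N}{G}$, and I would establish it by computing both sides explicitly as ordered pairs of partitions and checking that the superclass partitions agree; since superclasses determine supercharacters (Theorem 2.2(c) of \cite{diaconis_isaacs}), that suffices. On the left, by the definition of the $*$-product (Theorem~\ref{thm_starproduct}) the superclasses of $\CCC_N*\CCC^{G/N}$ are
$$\{K\in\KK:K\sseq N\}\ \cup\ \widetilde{\{KN/N:K\in\KK\}}\ -\ \{N\}.$$
On the right, $\mm{N}{G}=\Conj{G}{N}*\m(G/N)$ has as superclasses the conjugacy classes of $G$ inside $N$ together with the pullbacks of nontrivial conjugacy classes of $G/N$; taking the join with $\CCC$ amounts (by Proposition~\ref{prop_joinisthy}) to taking the partition join $\KK\join(\text{superclasses of }\mm{N}{G})$. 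I would argue that a part of $\KK$ either lies inside $N$—in which case joining with the conjugacy classes inside $N$ leaves it unchanged since $\KK$ refines nothing new there, it already is a union of $\CCC$-superclasses—or it lies outside $N$, in which case the join with the pulled-back conjugacy classes of $G/N$ merges exactly the superclasses $K$ having the same image $KN/N$, by Lemma~\ref{leungmalemma}. That merging produces precisely the sets $\widetilde{KN/N}$, so the two superclass partitions coincide.

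The main obstacle I anticipate is the bookkeeping on the character side needed to confirm that the $*$-product $\CCC_N*\CCC^{G/N}$ is well-defined in the sense of Theorem~\ref{thm_starproduct}: one must check that $\CCC_N\in\Supinv{G}(N)$ (done above) and that the partition $\{f(X):X\in\XX\}^G$ of $\Irr(G)$ has $\Irr(G/N)$ as a union of its parts, so that $\CCC^{G/N}$ can be grafted in. Here the key point is the remark preceding Lemma~\ref{lem_fXsdisjoint}: for $X$ a union of sets $\Irr(G|\psi)$ one has $(f(X))^G=X$, and more generally $f(X)^G$ recovers the smallest such union containing $X$. I would use this to show $(\{f(X):X\in\XX\})^G=\{X\in\XX:\text{no further merging}\}$ refines back to something whose restriction to $\Irr(G/N)$ is exactly $\{X\in\XX:X\sseq\Irr(G/N)\}$, which is what $\CCC^{G/N}$ supplies. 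Once both sides are exhibited as the same pair of partitions, the equality of the two supercharacter theories is immediate, and the fact that each side \emph{is} a genuine supercharacter theory follows: the right-hand side by Propositions~\ref{prop_joinisthy} and~\ref{thm_starproduct}, hence so does the left, which retroactively proves that $\CCC_N$ and $\CCC^{G/N}$ were supercharacter theories to begin with (reading off the $N$- and $G/N$-components of a valid $*$-product).
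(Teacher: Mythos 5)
There is a genuine gap, and it sits exactly at the point you defer: your plan derives the fact that $\CCC_N$ and $\CCC^{G/N}$ are supercharacter theories ``retroactively'' from the identity $\CCC_N*\CCC^{G/N}=\CCC\join\mm{N}{G}$, but the identity cannot be set up without that fact. You compute the superclasses of the left-hand side via Theorem \ref{thm_starproduct}, whose hypotheses are precisely that $\CCC_N\in\Supinv{G}(N)$ and $\CCC^{G/N}\in\Sup(G/N)$; and your appeal to \cite[Theorem 2.2(c)]{diaconis_isaacs} (``superclasses determine supercharacters'') to conclude equality also requires both sides to already be supercharacter theories. Nor does the retroactive step go through even if one grants that the glued pair $(\ZZ,\MM)$ of partitions is a supercharacter theory of $G$: that only yields $|\ZZ|=|\MM|$, i.e.\ $|\XX|+|\YY|=|\KK|+|\LL|$, not the separate equalities $|\{f(X):X\in\XX\}|=|\KK_N|$ and $|\YY|=|\LL|$ that the definition of $\CCC_N$ and $\CCC^{G/N}$ demands. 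Those come only from first proving the constancy conditions for the two small theories and invoking the inequality $|\XX|\leq|\KK|$ of \cite[Theorem 2.2]{diaconis_isaacs} on each piece, so that the total count forces equality throughout --- which is the actual content of the proposition and is absent from your proposal (your opening paragraph checks only that the relevant sets are partitions and are $G$-stable, which is the easy part).

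The constancy conditions are themselves not free. For a general part $X\in\XX$ one need not have $X=f(X)^G$, so $(\s_X)_N$ need not be a multiple of $\s_{f(X)}$, and constancy of $\s_{f(X)}$ on the superclasses inside $N$ does not follow directly from constancy of $\s_X$. The paper's device is to replace $\CCC$ by $\BBB=\CCC\join\mm{N}{G}$ at the outset: by Lemmas \ref{leungmalemma} and \ref{lem_fXsdisjoint} one has $\BBB_N=\CCC_N$ and $\BBB^{G/N}=\CCC^{G/N}$, while every character part $Z$ of $\BBB$ outside $\Irr(G/N)$ is a union of sets $\Irr(G|\psi)$, so $Z=f(Z)^G$ and $(\s_Z)_N=\ix{G}{N}\,\s_{f(Z)}$, giving the needed constancy; then the counting argument above shows $\BBB_N$ and $\BBB^{G/N}$ are supercharacter theories, and the identity $\CCC_N*\CCC^{G/N}=\BBB$ is immediate from the $*$-product formula. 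Your description of the superclass partition of $\CCC\join\mm{N}{G}$ (merging superclasses outside $N$ with the same image mod $N$, via Lemma \ref{leungmalemma}) is correct and agrees with the paper, but it addresses the easy half; to repair the proof you should restructure it along the lines just indicated rather than trying to read the small theories off a $*$-product you have not yet licensed yourself to form.
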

\begin{proof}
  Let $\BBB=\CCC\join\mm{N}{G}$ 
  and consider $\BBB_N$ and $\BBB^{G/N}$.
  Note that replacing $\CCC$ with $\BBB$ 
  artificially fuses together the superclasses outside $N$
  in such a way that they become unions of $N$-cosets.
  However, since $KN/N$ and $LN/N$ are either disjoint or equal
  for superclasses $K$ of $\CCC$ by Lemma \ref{leungmalemma},
  it follows that the set $\{MN/N: M\in\MM\}$ is the same whether we take
  $\MM$ to be the superclasses of $\CCC$ or of $\BBB$.

  Replacing $\CCC$ with $\BBB$ similarly coarsens the partition of characters
  until every part outside $\Irr(G/N)$ is a union of sets of the form $\Irr(G|\psi)$.
  But since $f(X)$ and $f(Y)$ are either disjoint or equal by Lemma \ref{lem_fXsdisjoint},
  we obtain the same partition $\{f(Z): Z\in\ZZ\}$ of $\Irr(N)$
  whether we take the partition $\ZZ$ of $\Irr(G)$ from $\CCC$ or from $\BBB$.

  Replacing $\CCC$ with $\BBB$ does not change 
    the superclasses which lie within $N$, however,
    nor that portion of the partition of $\Irr(G)$ which lies within $\Irr(G/N)$.
  Consequently $N$ is $\BBB$-normal and $\BBB_N=\CCC_N$ and $\BBB^{G/N}=\CCC^{G/N}$.
  %Therefore the supercharacters of $\CCC^{G/N}$ are indeed those supercharacters of $\CCC$
  %with $N$ in their kernels,
  %and the superclasses of $\CCC_N$ are those superclasses of $\CCC$ which lie in $N$.
  %
  Write $\BBB=(\ZZ,\MM)$, and let
  \begin{eqnarray*}
    \KK &=& \{M\in\MM: M\sseq N\},                                 \\
    \LL &=& \{MN/N: M\in\MM\}        \\ 
    \XX &=& \{f(Z): Z\in\ZZ\}, \mbox{~and}\\
    \YY &=& \{Z\in\ZZ: Z\sseq\Irr(G/N)\},                          
  \end{eqnarray*}
  so that $\BBB_N=\CCC_N = (\XX,\KK)$ and $\BBB^{G/N}=\CCC^{G/N}=(\YY,\LL)$.
  Note that since each part of $\MM$ outside $N$ is a union of $N$-cosets,
  we have $|\MM|=|\KK|+|\LL|-1$;
  likewise since each member of $\ZZ$ outside $\Irr(G/N)$ is a union of sets of the form $\Irr(G|\psi)$,
  we have $|\ZZ|=|\XX|+|\YY|-1$.
  
  Let us now verify that the sets $\XX$, $\KK$, $\YY$, and $\LL$
  are partitions of the appropriate sets.
  Since $N$ is $\BBB$-normal, it follows that $\KK=\{M\in\MM: M\sseq N\}$
  is a partition of $N$.
  That $\LL$ is a partition of $G/N$ follows from Lemma \ref{leungmalemma}.
  %Because $\tw{MN/N}=M$ for $M\not\sseq N$, we may also note that $\MM=\KK\dcup (\tw{\LL}-\{N\})$.
  As for the characters,
  $\YY=\{Z\in\ZZ: Z\sseq\Irr(G/N)\}$
  is a partition of $\Irr(G/N)$
  because $N$ is $\BBB$-normal,
  and we saw above that the set 
  $\{f(Z): Z\in\ZZ\}$ is a partition of $\Irr(N)$.
  %Moreover, since $(f(Z))^G=Z$, we also see that $\ZZ = \YY\dcup (\XX^\circ)^G$.

  To show that $\BBB_N$ and $\BBB^{G/N}$ are indeed supercharacter theories,
  it remains to show that the purported supercharacters are constant on the superclasses,
  that $|\XX|=|\KK|$, and that $|\YY|=|\LL|$.
  
  Consider first $\BBB_N$. %$(\XX,\KK)$.
  Let $X\in\XX$.  
  If $X=\{1_N\}$, then $\s_X=1_N$ is trivially constant on all $K\in\KK$;
  otherwise, $X=f(Z)$ for some $Z\in\ZZ$ with $Z\not\sseq\Irr(G/N)$.
  Since $Z$ is a union of sets of the form $\Irr(G|\psi)$,
  we have $Z=X^G$.
  Then because
  $$(\s_Z)_N = \left(\s_{(X^G)}\right)_N = \left( (\s_X)^G\right)_N = \ix{G}{N} \s_X$$
  %$$\ix{G}{N} \s_X = \left( (\s_X)^G\right)_N=\left(\s_{X^G}\right)_N = (\s_Z)_N$$
  is constant on each $K\in\KK$, so too is $\s_X$.
  We conclude that $\XX$ is constant on $\KK$.
  
  Now consider $(\YY,\LL)$,
  and let $Y\in\YY\sseq\ZZ$.  
  Then $\s_Y$ is constant 
  on every superclass $M\in\MM$.
  But since $\s_Y$ has $N$ in its kernel,
  when viewed as a character of $G/N$
  it is constant on the images of those superclasses,
  namely the members of $\LL$.
  We conclude that $\YY$ is constant on $\LL$.
  
  Now because $\XX$ is constant on $\KK$
  and $\YY$ is constant on $\LL$,
  by \cite[Theorem 2.2]{diaconis_isaacs}
  we know that $|\XX|\leq |\KK|$ and $|\YY|\leq |\LL|$.
  But then
  $$|\KK|+|\YY|-1\leq |\KK|+|\LL|-1=|\MM|=|\ZZ|=|\YY|+|\XX|-1\leq |\YY|+|\KK|-1,$$
  so equality must hold throughout; hence $|\XX|=|\KK|$ and $|\YY|=|\LL|$.
  %But we also have that
  %$$|\YY|+|\XX|-1=|\ZZ|=|\MM|=|\KK|+|\LL|-1,$$
  %so $|\XX|+|\YY| = |\KK|+|\LL|$.
  %Hence equality must hold in both cases,
  %and $|\XX|=|\KK|$ and $|\YY|=|\LL|$.
  We conclude that $\CCC_N=\BBB_N=(\XX,\KK)$ is a supercharacter theory of $N$
  and $\CCC^{G/N}=\BBB^{G/N}=(\YY,\LL)$ is a supercharacter theory of $G/N$;
  the former is $G$-invariant 
  because its superclasses are also superclasses of $\CCC$.
  Finally, by definition
  \begin{eqnarray*}
    \CCC_N*\CCC^{G/N}
        = (\XX,\KK)*(\YY,\LL)
        &=& \left(\YY\cup \XX^G - \{\Irr(G/N)\} \widecomma \KK \cup \tw{\LL}-\{N\} \right) \\
        &=& (\ZZ, \MM) \\
        &=& \CCC\join\mm{N}{G},
  \end{eqnarray*}
  as desired.\qed
\end{proof}

\section{Wedge products}\label{sect_wtp}

In 1996 K.H. Leung and S.H. Man gave
a qualitative classification 
of the S-rings of cyclic groups \cite{leung_man1996}.
To do so they defined a ``wedge product'' of S-rings as follows.

\begin{proposition}[\cite{leung_man1998}, Proposition 1.4]
  Let $G$ be a group with subgroups $N$ and $M$
  such that $N\leq M$ and $N\norm G$.
  Let $\rho: G\to G/N$ be the natural surjection.
  For an S-ring $S_\KK$ over $M$ with Schur partition $\KK$, define
    $$\rho^*(S_\KK) = \spn_\C\{\hat{\rho(K)}: K\in\KK\} \sseq \C[G/N].$$
  Let $S_\LL$ be an S-ring over $G/N$ with basic sets $\LL$
  such that $M/N$ is an $S_\LL$-subgroup,
  and suppose both that $\hat{H}\in S_\KK$
               and that $\rho^*(S_\KK) = \C[M/N] \cap S_\LL$.
  Then there exists an S-ring $S$ over $G$ with Schur partition
  $$\KK \cup \left\{ \rho\inv(E): E \in \LL, E\not\sseq M/N \right\}.$$
  Moreover, $S\cap \C[M]=S_\KK$ and $\rho^*(S)=S_\LL$.
\end{proposition}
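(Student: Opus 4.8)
The plan is to construct the S-ring explicitly. Set $\mathcal P=\KK\cup\{\rho\inv(E):E\in\LL,\ E\not\sseq M/N\}$ and $S=\spn_\C\{\hat P:P\in\mathcal P\}\sseq\C[G]$; one must then show that $\mathcal P$ is a Schur partition with $S_{\mathcal P}=S$, that $S\cap\C[M]=S_\KK$, and that $\rho^*(S)=S_\LL$. The members of $\mathcal P$ being pairwise disjoint and nonempty, the set $\{\hat P:P\in\mathcal P\}$ is automatically linearly independent, so the one substantive point in the first claim is that $S$ be closed under multiplication. The engine of the argument is the linear map $\iota:\C[G/N]\to\C[G]$ with $\iota(Ng)=\widehat{\rho\inv(Ng)}=\sum_{x\in Ng}x$: since $N\norm G$, the element $e=\rec{|N|}\hat N$ is a central idempotent of $\C[G]$, the map $j=\rec{|N|}\iota$ is an algebra homomorphism of $\C[G/N]$ onto the ideal $\C[G]e$ with $\rho\circ j=\mathrm{id}$, and for $a\in\C[M]$ one has $ae=j(\rho(a))$, where $\rho$ also denotes the induced homomorphism $\C[M]\to\C[M/N]$.

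From the hypothesis $\hat N\in S_\KK$ I would extract two facts. First, $N$ is a normal $S_\KK$-subgroup of $M$, so by Lemma~\ref{leungmalemma} the sets $\{KN/N:K\in\KK\}$ form a partition of $M/N$, and $\rho^*(S_\KK)$ is spanned by the elements $\widehat{KN/N}$. Second, for each $K\in\KK$ the product $\hat K\hat N$ lies in $S_\KK$ and has support exactly $KN$, so $KN$ is a union of basic sets and $\widehat{KN}\in S_\KK$. Combining the second fact with the compatibility hypothesis, $\iota$ carries $S_\LL\cap\C[M/N]=\rho^*(S_\KK)$ into $S_\KK$, because $\iota(\widehat{KN/N})=\widehat{KN}$; consequently $S=S_\KK+j(S_\LL)$ as subspaces of $\C[G]$.

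That $\mathcal P$ is a set partition of $G$ containing $\{1\}$ and closed under inversion is routine: the sets $\{\rho\inv(E):E\in\LL\}$ partition $G$, those with $E\sseq M/N$ have union $\rho\inv(M/N)=M$ since $M/N$ is an $S_\LL$-subgroup, and $\KK$ partitions $M$, while inversion commutes with $\rho$ and fixes $M/N$. For closure under multiplication I would use $S=S_\KK+j(S_\LL)$ and check three kinds of product: $S_\KK\cdot S_\KK\sseq S_\KK$ because $S_\KK$ is a ring; $j(S_\LL)\cdot j(S_\LL)\sseq j(S_\LL)$ because $j(S_\LL)$ is the image of the subalgebra $S_\LL$ under a homomorphism; and, for $a\in S_\KK$ and $b\in S_\LL$,
$$a\,j(b)=a\,e\,j(b)=j(\rho(a))\,j(b)=j\bigl(\rho(a)\,b\bigr),$$
so that it suffices to know $\rho(a)\,b\in S_\LL$. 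This is where the structure theory of S-rings enters: the homomorphism $\rho:\C[M]\to\C[M/N]$ sends each basic set $K$ of $S_\KK$ to a scalar multiple of $\widehat{KN/N}$ (the standard fact that a basic set meets every coset of an $S$-subgroup in the same number of points), so $\rho(S_\KK)=\rho^*(S_\KK)=\C[M/N]\cap S_\LL\sseq S_\LL$, whence $\rho(a)\in S_\LL$ and $\rho(a)\,b\in S_\LL$. Thus $a\,j(b)\in j(S_\LL)\sseq S$, and $S$ is an S-ring with Schur partition $\mathcal P$. I expect this cross-term, and precisely the identity $\rho(S_\KK)=\rho^*(S_\KK)$ — the uniformity of basic sets across the cosets of $N$ — to be the main obstacle: it is the one place where formal manipulation does not suffice and one must appeal to how basic sets behave under quotient maps.

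The two ``moreover'' assertions then follow from $S=S_\KK+j(S_\LL)$. If $a+j(b)\in\C[M]$ with $a\in S_\KK$ and $b\in S_\LL$, then $j(b)\in\C[M]$ forces $b\in S_\LL\cap\C[M/N]=\rho^*(S_\KK)$, and then $j(b)\in j(\rho^*(S_\KK))\sseq S_\KK$ by the computation above; hence $S\cap\C[M]=S_\KK$. Finally, applying $\rho^*$ to the Schur partition of $S$ gives
$$\rho^*(S)=\spn_\C\{\widehat{\rho(K)}:K\in\KK\}+\spn_\C\{\hat E:E\in\LL,\ E\not\sseq M/N\}=\rho^*(S_\KK)+\spn_\C\{\hat E:E\not\sseq M/N\},$$
and since $\rho^*(S_\KK)=\C[M/N]\cap S_\LL$ is the span of the basic sets of $S_\LL$ contained in $M/N$, this sum is all of $S_\LL$.
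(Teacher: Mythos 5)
The paper itself offers no proof of this proposition --- it is imported verbatim from Leung and Man --- so there is nothing internal to compare against; what you have written is a correct, essentially self-contained direct proof. The decomposition $S=S_\KK+j(S_\LL)$ via the central idempotent $e=\rec{|N|}\hat{N}$ is exactly the right mechanism: it makes the partition/inversion checks routine and reduces closure under multiplication to your three cases, with the cross term as the only substantive step. You are also right that the load-bearing ingredient there is the uniformity statement $\rho(S_\KK)=\rho^*(S_\KK)$; since you only cite it as ``standard,'' record its two-line proof to close the argument: because $\hat{N}\in S_\KK$, the element $\hat{N}\hat{K}$ lies in $S_\KK$ and its coefficient at $x$ is $|K\cap Nx|$; as an element of $S_\KK$ its coefficient function is constant on the basic set $K$, and every coset of $N$ contained in $KN$ meets $K$, so $|K\cap Nx|$ takes a single positive value on all cosets $Nx\sseq KN$, whence $\rho(\hat{K})$ is a positive scalar multiple of $\widehat{\rho(K)}$ and $\rho(S_\KK)=\rho^*(S_\KK)=\C[M/N]\cap S_\LL\sseq S_\LL$. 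Two minor points: the hypothesis ``$\hat{H}\in S_\KK$'' in the statement is a typo for $\hat{N}\in S_\KK$, which is how you correctly read it; and since $\C[G]$ need not be commutative you should note the mirror computation $j(b)\,a=j(b)\,e\,a=j(b\,\rho(a))\in j(S_\LL)$, which works because $e$ is central. With those additions, your verifications of the Schur partition, of $S\cap\C[M]=S_\KK$ (using $j(b)\in\C[M]$ forces $b\in\C[M/N]$, then $j(\rho^*(S_\KK))\sseq S_\KK$ via $\iota(\widehat{KN/N})=\widehat{KN}$), and of $\rho^*(S)=S_\LL$ are all sound.
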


Leung and Man call this the \defnstyle{wedge product} of $S_\KK$ and $S_\LL$
and denote it by $S_\KK\wedge S_\LL$.
Note that this product is only defined when $\hat{H}\in S_\KK$ and $\rho^*(S_\KK) = \C[M/N] \cap S_{\LL}$.
Muzychuk and Ponomarenko refer to this as a \defnstyle{generalized wreath product} \cite{muzychuk_ponomarenko}.

By Proposition \ref{sringbijection}, there must be a corresponding construction of supercharacter theories,
which we now provide.
We are considering the situation
when $N\leq M$ are normal subgroups of $G$,
with ``overlapping'' supercharacter theories
$\CCC\in\Sup(M)$ and
$\DDD\in\Sup(G/N)$,
as in the following diagram:
$$\makebox[0pt][l]{$\underbrace{\phantom{1\leq N\leq M}}_{\CCC}$}1\leq \overbrace{N\leq M\leq G}^{\DDD}$$
Our construction will be a generalization of the $*$-product above, in which case $N$ and $M$ were equal.
In order for $\CCC$ and $\DDD$ to combine to form a supercharacter theory for $G$,
they must satisfy certain conditions.
We will of course want $N$ to be $\CCC$-normal and $M/N$ to be $\DDD$-normal,
but we will also want the ``overlap'' of the two theories on $M/N$ to be the same;
more explicitly, we will require $\CCC^{M/N}=\DDD_{M/N}$.

\begin{theorem}\label{thm_wtpproduct}
  Let $G$ be a group with normal subgroups $N\leq M$.
  Suppose $\CCC\in\Sup_G(M)$ and $\DDD\in\Sup(G/N)$ such that
  \begin{enumerate}
    \item $N$ is $\CCC$-normal,
    \item $M/N$ is $\DDD$-normal, and
    \item $\CCC^{M/N}=\DDD_{M/N}$.
  \end{enumerate}
  Then there exists a unique supercharacter theory $\EEE\in\Sup(G)$
  such that $\EEE_M=\CCC$ and $\EEE^{G/N}=\DDD$
  and every superclass outside $M$ is a union of $N$-cosets.

  Using our earlier notation,
  if $\CCC=(\XX,\KK)$ and $\DDD=(\YY,\LL)$, 
  then
  \begin{equation*}%\label{eqn_wtpdefn}
    \EEE=\left( \YY\cup\{X^G: X\in\XX,~X\not\sseq\Irr(M/N)\}
                \widecomma 
                \KK\cup\{\tw{L}: L\in\LL,~L\not\sseq M/N\}\right).
  \end{equation*}
\end{theorem}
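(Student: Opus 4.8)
The plan is to verify that the ordered pair $\EEE=(\ZZ,\MM)$ displayed in the statement, with $\ZZ=\YY\cup\{X^G:X\in\XX,\,X\not\sseq\Irr(M/N)\}$ and $\MM=\KK\cup\{\tw L:L\in\LL,\,L\not\sseq M/N\}$, is a supercharacter theory of $G$; then to check it has the three claimed properties ($\EEE_M=\CCC$, $\EEE^{G/N}=\DDD$, superclasses outside $M$ are unions of $N$-cosets); and finally to establish uniqueness. The first task is the bulk of the work, and it parallels the proof of Theorem \ref{thm_starproduct}. First I would check that $\ZZ$ and $\MM$ really are partitions of $\Irr(G)$ and of $G$ respectively. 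For $\MM$: by Clifford theory $\{X^G:X\in\XX\}$ is a partition of $\Irr(G)$ once we know $\XX$ is a partition of $\Irr(M)$ into unions of $G$-orbits (this uses $\CCC\in\Sup_G(M)$), and since $\{1_M\}^G=\Irr(G/M)$ we are replacing that one part; but here it is $\Irr(M/N)$, not $\Irr(M/M)$, that gets subdivided by $\YY$, so I must argue that $\{X^G:X\in\XX,\,X\sseq\Irr(M/N)\}$ together with $\{Y\in\YY:Y\sseq\Irr(M/N)\}$ agree — this is exactly where hypothesis (c), $\CCC^{M/N}=\DDD_{M/N}$, enters, together with the observation that for $X\sseq\Irr(M/N)$ one has $X^G=(f(X))^G$ matching the $\tw{\,\cdot\,}$-operation. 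Dually, $\tw\LL$ is a partition of $G$ by Lemma \ref{leungmalemma}-style reasoning, its part $\tw{M/N}=M$ gets replaced by $\KK$, and the parts of $\KK$ that happen to lie in $N$ must be reconciled with the images of the $\DDD$-superclasses inside $M/N$ — again hypothesis (c).

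Next I would count: since each $X^G$ for $X\not\sseq\Irr(M/N)$ is distinct and each such $X$ is "used up", $|\ZZ|=|\YY|+(|\XX|-|\{X\in\XX:X\sseq\Irr(M/N)\}|)$; using $\CCC^{M/N}=\DDD_{M/N}$ the subtracted count equals $|\DDD_{M/N}|=|\mathcal{C}^{M/N}|$, and symmetrically $|\MM|=|\LL|+|\KK|-|\{K\in\KK:K\sseq N\}|$ with the subtracted count again equal to $|\CCC_N|=|\CCC^{M/N}|$; since $|\XX|=|\KK|$, $|\YY|=|\LL|$, and $|\CCC_N|=|\CCC^{M/N}|$ (both equal $\dim$ of the relevant algebra by Proposition \ref{lem_restrictworks}), we get $|\ZZ|=|\MM|$. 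Then I would show $\s_Z$ is constant on each $M'\in\MM$ by the same case split as in Theorem \ref{thm_starproduct}: if $Z\in\YY$ then $\s_Z$ is inflated from $G/N$, hence constant on each $\tw L$, and since $Z\sseq\Irr(M/N)$ forces $Z$ to lie among the characters trivial on $N$, it is also constant on the superclasses in $\KK$ (here use that $\KK$ refines, on $N$, the pullback of $\LL_{M/N}$, which is hypothesis (c) once more); if $Z=X^G$ with $X\not\sseq\Irr(M/N)$, then by Lemma \ref{lem_calculatesigmaXG} $\s_Z=(\s_X)^G$, which vanishes off $M$ (since $M\norm G$), so it is constant — indeed zero — on every $\tw L\not\sseq M/N$, and $(\s_Z)_M=\ix{G}{M}\s_X$ because $X$ is $G$-invariant, so $\s_Z$ is constant on each $K\in\KK$.

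Having established $\EEE\in\Sup(G)$, the three properties follow essentially by construction: $\{M'\in\MM:M'\sseq M\}=\KK$ and $\{Z\in\ZZ:Z\sseq\Irr(G/N)\}=\YY$ give $\EEE^{G/N}=\DDD$; computing $f(Z)$ for $Z\in\ZZ$ recovers $\XX$, giving $\EEE_M=\CCC$ (using $(f(X^G))=X$ for $X$ a union of $G$-orbits, the remark before Lemma \ref{lem_fXsdisjoint}); and the superclasses outside $M$ are the $\tw L$ with $L\not\sseq M/N$, which are visibly unions of $N$-cosets. For uniqueness, suppose $\EEE'=(\ZZ',\MM')$ is another such theory. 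Since every superclass of $\EEE'$ outside $M$ is a union of $N$-cosets and $\EEE'_M=\CCC$ pins down the superclasses inside $M$, while $\EEE'^{G/N}=\DDD$ pins down $\{M'N/N:M'\in\MM'\}$, one recovers $\MM'$ exactly as $\MM$; since superclasses determine supercharacters (by \cite[Theorem 2.2(c)]{diaconis_isaacs}), $\EEE'=\EEE$.

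The main obstacle I anticipate is bookkeeping the "overlap" correctly: showing rigorously that the part $\Irr(M/N)$ of $\XX^G$ is partitioned by $\YY$ in a way \emph{compatible} with how the part $M$ of $\tw\LL$ is partitioned by $\KK$, i.e. that the two halves of condition (c) — one about characters, one about classes, via the $\CCC^{M/N}=\DDD_{M/N}$ identification of \emph{pairs} — glue without conflict. Getting the index-counting identity $|\CCC_N|=|\CCC^{M/N}|$ and the matching on the nose, rather than merely up to refinement, is where Lemma \ref{lem_fXsdisjoint}, Lemma \ref{leungmalemma}, and Proposition \ref{lem_restrictworks} must be deployed with care; once that is in hand the character-constancy verification is routine by analogy with Theorem \ref{thm_starproduct}.
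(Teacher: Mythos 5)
Your overall architecture matches the paper's: build the two partitions, count, run the same two-case constancy check as in Theorem \ref{thm_starproduct}, then read off $\EEE_M=\CCC$ and $\EEE^{G/N}=\DDD$ and prove uniqueness from the superclasses. (Your uniqueness sketch --- recover each superclass outside $M$ as the full preimage $\tw{L}$ of its image $L\in\LL$, using that it is a union of $N$-cosets and that $M/N$ is $\DDD$-normal --- is in fact more direct than the paper's, which routes through $\FFF\join\mm{N}{G}=\FFF_N*\FFF^{G/N}$ via Proposition \ref{lem_restrictworks}; both work.)

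However, the counting step as you wrote it is wrong. You claim $|\MM|=|\LL|+|\KK|-|\{K\in\KK:K\sseq N\}|$ and then invoke an ``identity'' $|\CCC_N|=|\CCC^{M/N}|$; neither statement is true. The parts of $\MM$ are \emph{all} of $\KK$ together with the sets $\tw{L}$ for $L\in\LL$ with $L\not\sseq M/N$, so the correct count is $|\MM|=|\KK|+|\LL|-|\{L\in\LL: L\sseq M/N\}|=|\KK|+|\LL|-|\DDD_{M/N}|$; nothing indexed by $\KK$ is discarded. And $\CCC_N$ and $\CCC^{M/N}$ are supercharacter theories of different groups ($N$ and $M/N$) whose sizes are unrelated --- already in the $*$-product case $N=M$ one has $|\CCC_N|=|\CCC|$ while $|\CCC^{M/N}|=1$; Proposition \ref{lem_restrictworks} gives no such equality. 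Your two errors happen to cancel numerically, but the justification fails; the correct route is simply $|\ZZ|=|\YY|+|\XX|-|\CCC^{M/N}|$, $|\MM|=|\KK|+|\LL|-|\DDD_{M/N}|$, and hypothesis (c) forcing $|\CCC^{M/N}|=|\DDD_{M/N}|$, together with $|\XX|=|\KK|$, $|\YY|=|\LL|$. Relatedly, hypothesis (c) is not what makes $\ZZ$ and $\MM$ partitions: the parts $X^G$ with $X\sseq\Irr(M/N)$ are discarded wholesale and replaced by all of $\YY$ (and dually $M$, the preimage of $M/N$, is replaced by all of $\KK$), so partition-hood needs only that $N$ is $\CCC$-normal, $M/N$ is $\DDD$-normal, and $\CCC$ is $G$-invariant. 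Where (c) genuinely enters, besides the count, is your constancy check for $Z\in\YY$ on the parts of $\KK$ (each $KN/N$ must be a $\DDD$-superclass, so that the inflated $\s_Z$ is constant on $KN\supseteq K$), which you do state --- modulo the slip that $\KK$ partitions $M$, not $N$.
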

\begin{proof}
  For every superclass $L$ of $\DDD$ lying outside $M/N$, 
  take its preimage $\tw{L}$ in $G$;
  because $M/N$ is $\DDD$-normal,
  this gives a partition of $G\setminus M$.
  To this set add all the superclasses of $\CCC$;
  since these partition $M$,
  the resulting set $\KK\cup \{\tw{L}: L\in\LL,~L\not\sseq M/N\}$
  is a partition of $G$
  %with $\{1\}$ as one of the parts,
  which we shall call $\JJ$.
  Recalling that $|\CCC|$ denotes the number of superclasses of $\CCC$,
  note that $|\JJ|=|\CCC|+\left(|\DDD|-|\DDD_{M/N}|\right)$.
  
  Now because $N$ is $\CCC$-normal,
  the subset $\Irr(M/N)$ is a union of parts of $\XX$,
  as discussed in Section \ref{disc_altCnormalX}.
  Hence $\{X\in\XX: X\not\sseq\Irr(M/N)\}$ partitions $\Irr(M)-\Irr(M/N)$,
  so the set $\{X^G: X\in\XX,~X\not\sseq\Irr(M/N)\}$ partitions $\Irr(G)-\Irr(G/N)$
  since $\CCC$ is $G$-invariant.
  Since $\YY$ is a partition of $\Irr(G/N)$,
  the union $\YY\cup\{X^G: X\in\XX,~X\not\sseq\Irr(M/N)\}$
  is a partition of $\Irr(G)$; call it $\WW$.
  %(Symbolically, $\WW=\YY\cup\{X^G: X\in\XX,~X\not\sseq\Irr(M/N)\}$.)
  Note that 
  $$|\WW|=|\DDD|+(|\CCC|-|\CCC^{M/N}|)=|\CCC|+\left(|\DDD|-|\DDD_{M/N}|\right)=|\JJ|.$$
  Then to prove that $(\WW,\JJ)$ is a supercharacter theory of $G$,
  it remains only to show that $\s_W$ is constant on $J$ 
  for each $W\in\WW$ and each $J\in\JJ$.
  
  Let $W\in\WW$.  It may be that $W\in\YY$, 
  so that $\s_W$ is a supercharacter of $\DDD$.
  In this case, there are two sorts of sets $J\in\JJ$ to consider:
  those that lie in $G\setminus M$ and those within $M$.
  First, the supercharacter $\s_W$ of $\DDD$
  is constant on each superclass $L$ of $\DDD$ lying outside $M/N$,
  so $\s_W$ (viewed as a character of $G$) 
  is constant on each preimage $\tw{L}$ in $G\setminus M$.
  Thus $\s_W$ is constant on each set $J\in\JJ$ that lies in $G\setminus M$.
  Next, note that
  $\s_W$ is constant on the nontrivial superclasses of $\DDD_{M/N}=\CCC^{M/N}$,
  %But then $\s_W$ is constant on the preimages in $M$ of those superclasses,
  %which are unions of the superclasses of $\CCC$.
  so $\s_W$ is also constant on the superclasses of $\CCC$.
  Hence $\s_W$ is constant on every set $J\in\JJ$ that lies in $M$.
  Therefore $\s_W$ is constant on every member of $\JJ$,
  under the supposition that $W\in\YY$.
  
  The other possibility is that $W=X^G$ 
  for some part $X\in\XX$ not lying in $\Irr(M/N)$.
  Since $\CCC$ is $G$-invariant, the set $X$ must be a union of $G$-orbits,
  so $\s_W=\s_{(X^G)}=(\s_X)^G$ by Lemma \ref{lem_calculatesigmaXG}.
  Then because $M\norm G$, we know that $\s_W$ vanishes outside $M$,
  and hence is constant on all parts $J\in\JJ$ that lie outside $M$.
  On the other hand, each part $J\in\JJ$ that lies in $M$ 
  is a superclass of $\CCC$,
  and when $\s_W$ is restricted to $M$, 
  the character $(\s_W)_M=\left((\s_X)^G\right)_M=\ix{G}{M}\s_X$ is constant
  on $J$ because $\s_X$ is.
  
  Hence $\s_W$ is constant on each part $J\in\JJ$ for all parts $W\in\WW$, 
  and we conclude that $(\WW,\JJ)$ is a supercharacter theory of $G$.  
  Let $\EEE=(\WW,\JJ)$; we need to show 
  that $\EEE$ satisfies the conclusions of the theorem.
  By construction, the superclasses of $\EEE$ that lie in $M$
  are the superclasses of $\CCC$, so $\CCC=\EEE_M$.
  Likewise the supercharacters of $\EEE^{G/N}$
  are those supercharacters of $\EEE$ that have $N$ in their kernels,
  namely the supercharacters of $\DDD$;
  hence $\DDD=\EEE^{G/N}$.
  Third, by construction
  the superclasses of $\EEE$ outside $M$ are preimages
  of certain superclasses of $\DDD$, so they are unions of $N$-cosets.
  
  Finally, to show uniqueness, suppose $\FFF\in\Sup(G)$
  %Finally, to show uniqueness, suppose a supercharacter theory $\FFF\in\Sup(G)$
  satisfies the conditions that \mbox{$\FFF_M=\CCC$},
  that $\FFF^{G/N}=\DDD$,
  and that every superclass of $\FFF$ outside $M$
  is a union of $N$-cosets.
  %We shall show that $\FFF=\EEE$ by comparing their superclasses.
  Then $\FFF_M=\CCC=\EEE_M$, 
  so $\EEE$ has the same superclasses within $M$ as does $\FFF$.
  Moreover, because the superclasses of $\FFF$ outside of $M$ are unions
  of $N$-cosets, the set
  $$\begin{array}{r@{}c@{}l}
    \multicolumn{3}{l}{\{\mbox{superclasses of $\FFF$ outside $M$}\}} \\
     \qquad\qquad
      &{}={}& \{\mbox{superclasses of $\FFF\join\mm{N}{G}$ outside $M$}\} \\
      &=& \{\mbox{superclasses of $\FFF_N*\FFF^{G/N}$ outside $M$}\} \\
      &=& \{\mbox{superclasses of $\FFF_N*\DDD$ outside $M$}\} \\
      &=& \{\mbox{preimages of the superclasses of $\DDD$ outside $M/N$}\} \\
      &=& \{\mbox{superclasses of $\EEE$ outside $M$}\}.
  \end{array}$$
  Therefore $\FFF$ has the same superclasses as $\EEE$,
  so $\FFF=\EEE$ as desired.\qed
\end{proof}

Because the wedge-product notation ``$\CCC\wedge\DDD$'' of Leung and Man
might be confusing in the context of our lattice-theoretic $\join$ operation,
we will denote this product as $\CCC\wtp\DDD$.
It is easy to recognize and factor such products; the following is a reformulation
of \cite[Proposition 1.3]{leung_man1996} in the case of supercharacter theories.
\begin{proposition}\label{prop_wtprecognition}
  Let $G$ be a group, let $\CCC\in\Sup(G)$,
  and let $N$ and $M$ be $\CCC$-normal subgroups of $G$
  with $N\leq M$.
  Then $\CCC$ is a $\bigwtp$-product over $N$ and $M$
  if and only if every superclass outside $M$ is a union of $N$-cosets.
  In this case, $\CCC=\CCC_M\wtp\CCC^{G/N}$.
\end{proposition}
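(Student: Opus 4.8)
The plan is to derive the proposition entirely from Theorem~\ref{thm_wtpproduct}, and in particular from its uniqueness clause, so that essentially no fresh computation is needed.

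For the forward implication, suppose $\CCC$ is a $\bigwtp$-product over $N$ and $M$; by definition this means $\CCC=\CCC'\wtp\DDD'$ for some $\CCC'\in\Sup_G(M)$ and $\DDD'\in\Sup(G/N)$ satisfying conditions (a)--(c) of Theorem~\ref{thm_wtpproduct}. The final conclusion of that theorem states precisely that every superclass of $\CCC$ lying outside $M$ is a union of $N$-cosets, so this direction needs nothing further.

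For the converse, assume every superclass of $\CCC$ outside $M$ is a union of $N$-cosets. Since $N$ and $M$ are $\CCC$-normal they are unions of conjugacy classes, hence normal in $G$, and Proposition~\ref{lem_restrictworks} shows that $\CCC_M\in\Sup_G(M)$ and $\CCC^{G/N}\in\Sup(G/N)$. First I would check that the pair $(\CCC_M,\CCC^{G/N})$ satisfies the three hypotheses of Theorem~\ref{thm_wtpproduct}. Condition (a) holds because $N$, being $\CCC$-normal with $N\leq M$, is a union of superclasses of $\CCC$ that all lie in $M$, and these are exactly the superclasses of $\CCC_M$. Condition (b) holds because $M$ is a union of superclasses $K$ of $\CCC$, so $M/N$ is the corresponding union of the superclasses $KN/N$ of $\CCC^{G/N}$. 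For condition (c) I would compare superclasses: by Definition~\ref{defn_restrictdownandup} the superclasses of $(\CCC_M)^{M/N}$ are the sets $KN/N$ with $K\in\KK$ and $K\sseq M$, while those of $(\CCC^{G/N})_{M/N}$ are the sets $KN/N$ with $K\in\KK$ and $KN/N\sseq M/N$. Since $N\leq M$, the conditions $K\sseq M$ and $KN/N\sseq M/N$ are equivalent, so the two collections coincide (Lemma~\ref{leungmalemma} guaranteeing in each case that we genuinely have a partition of $M/N$); as a supercharacter theory is determined by its superclasses, it follows that $(\CCC_M)^{M/N}=(\CCC^{G/N})_{M/N}$, which is condition (c).

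With (a)--(c) verified, Theorem~\ref{thm_wtpproduct} yields a unique $\EEE\in\Sup(G)$ with $\EEE_M=\CCC_M$, with $\EEE^{G/N}=\CCC^{G/N}$, and with every superclass outside $M$ a union of $N$-cosets, and by definition this $\EEE$ equals $\CCC_M\wtp\CCC^{G/N}$. But $\CCC$ itself enjoys all three of these properties, so the uniqueness clause forces $\CCC=\EEE=\CCC_M\wtp\CCC^{G/N}$; in particular $\CCC$ is a $\bigwtp$-product over $N$ and $M$, which gives the remaining implication together with the factorization formula. The one place that demands care is the verification of condition (c), where one must keep straight which parts of the Schur partition survive restriction to $M$ versus projection to $G/N$; everything else is a bookkeeping-free appeal to Theorem~\ref{thm_wtpproduct} and Proposition~\ref{lem_restrictworks}.
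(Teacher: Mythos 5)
Your proof is correct. Note that the paper itself supplies no argument for this proposition (it is presented as a reformulation of Leung and Man's Proposition 1.3), so there is no in-paper proof to compare against; your derivation from Theorem \ref{thm_wtpproduct} is the natural one and it goes through. The forward implication is immediate from the explicit description of the superclasses of a $\wtp$-product (those outside $M$ are the preimages $\tw{L}$, hence unions of $N$-cosets). For the converse, your verifications of hypotheses (a)--(c) for the pair $(\CCC_M,\CCC^{G/N})$ are sound: $\CCC$-normality of $M$ and of $N$ gives, via Proposition \ref{lem_restrictworks}, that $\CCC_M\in\Sup_G(M)$ and $\CCC^{G/N}\in\Sup(G/N)$; $N$ is $\CCC_M$-normal because its constituent superclasses lie in $M$; $M/N$ is $\CCC^{G/N}$-normal because $M/N=\bigcup\{KN/N: K\in\KK,\ K\sseq M\}$; and the key bookkeeping point, that $K\sseq M$ is equivalent to $KN/N\sseq M/N$ when $N\leq M$, correctly identifies the superclass partitions of $(\CCC_M)^{M/N}$ and $(\CCC^{G/N})_{M/N}$, after which equality of the two theories follows since superclasses determine supercharacters. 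Finally, observing that $\CCC$ itself satisfies the three conditions characterizing the unique $\EEE$ of Theorem \ref{thm_wtpproduct} (with $\EEE_M=\CCC_M$, $\EEE^{G/N}=\CCC^{G/N}$, and superclasses outside $M$ unions of $N$-cosets, the last by hypothesis) is exactly the right use of the uniqueness clause, and it yields both that $\CCC$ is a $\wtp$-product and the factorization $\CCC=\CCC_M\wtp\CCC^{G/N}$ in one stroke.
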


\section{Direct products}

We close by considering a much simpler construction.
Tamaschke proved that if $S_\KK$ and $S_\LL$ are Schur rings on $M$ and $N$, respectively,
then there is an S-ring of $G=M\by N$
with partition 
$$\left\{ \{ (m,n): m\in K, n\in L\}: K\in\KK, L\in\LL \right\}$$
\cite[Theorem 6.1]{tamaschke1970}.
This Schur ring is ring-isomorphic to $S_\KK \otimes S_\LL$;
Leung and Man refer to it as the dot product ``$S_M \cdot S_N$.''

The corresponding supercharacter theory is equally straightforward.
Given two supercharacter theories $(\XX,\KK)\in\Sup(M)$ and $(\YY,\LL)\in\Sup(N)$,
we shall form a ``product'' theory $(\XX,\KK)\by(\YY,\LL)$.
As Tamaschke did, let
$$\MM = \{ K\by L : K\in\KK,~L\in\LL \} \mbox{~where~}
       K\by L = \{ (m,n) : m\in K,~n\in L\}\sseq G.$$
On the character side, 
we know that $\Irr(G) = \Irr(M)\by\Irr(N)$,
so let
$$\ZZ = \{ X\by Y : X\in\XX,~Y\in\YY\} \mbox{~where~}
        X\by Y = \{ \phi\by\theta : \phi\in X,~\theta\in Y\}\sseq\Irr(G).$$

\begin{proposition}
Using the above notation, $(\ZZ,\MM)\in\Sup(G)$.
\end{proposition}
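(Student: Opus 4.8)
The plan is to verify the three conditions of Definition~\ref{defn_supercharthy} for the pair $(\ZZ,\MM)$, namely that $|\ZZ|=|\MM|$, that each $\sigma_{X\by Y}$ is constant on each $K\by L$, and (implicitly, via the discussion after Definition~\ref{defn_supercharthy}) that these supercharacters account for all irreducibles. The cardinality condition is immediate: since $\KK$ and $\LL$ are partitions of $M$ and $N$, the products $K\by L$ form a partition of $M\by N$, so $|\MM|=|\KK|\cdot|\LL|$; similarly, because $\Irr(M\by N)=\Irr(M)\by\Irr(N)$ and $\XX,\YY$ partition $\Irr(M),\Irr(N)$, the sets $X\by Y$ partition $\Irr(G)$ and $|\ZZ|=|\XX|\cdot|\YY|$. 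Then $|\ZZ|=|\XX|\,|\YY|=|\KK|\,|\LL|=|\MM|$ using $|\CCC|=|\XX|=|\KK|$ and $|\DDD|=|\YY|=|\LL|$.

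First I would record the product formula for characters of a direct product: for $\phi\in\Irr(M)$ and $\theta\in\Irr(N)$, $(\phi\by\theta)(m,n)=\phi(m)\theta(n)$, and $(\phi\by\theta)(1)=\phi(1)\theta(1)$. From this and the definition $\sigma_X=\sum_{\phi\in X}\phi(1)\phi$, a short computation gives the key identity
\begin{equation*}
  \sigma_{X\by Y}(m,n)=\sum_{\phi\in X}\sum_{\theta\in Y}\phi(1)\theta(1)\phi(m)\theta(n)
    =\Bigl(\sum_{\phi\in X}\phi(1)\phi(m)\Bigr)\Bigl(\sum_{\theta\in Y}\theta(1)\theta(n)\Bigr)
    =\sigma_X(m)\,\sigma_Y(n),
\end{equation*}
so that $\sigma_{X\by Y}=\sigma_X\times\sigma_Y$ as a character of $M\by N$. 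Given this factorization, constancy is easy: if $(m,n),(m',n')\in K\by L$, then $m,m'\in K$ and $n,n'\in L$, so $\sigma_X(m)=\sigma_X(m')$ and $\sigma_Y(n)=\sigma_Y(n')$ because $(\XX,\KK)$ and $(\YY,\LL)$ are supercharacter theories, whence $\sigma_{X\by Y}(m,n)=\sigma_{X\by Y}(m',n')$.

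Finally I would note that every $\chi\in\Irr(G)$ has the form $\phi\by\theta$ with $\phi\in\Irr(M)$, $\theta\in\Irr(N)$, so $\chi$ lies in $X\by Y$ where $X\in\XX$ contains $\phi$ and $Y\in\YY$ contains $\theta$; since $\sigma_{X\by Y}=\sigma_X\times\sigma_Y$ has $\chi$ as a constituent (as $\phi$ is a constituent of $\sigma_X$ and $\theta$ of $\sigma_Y$), condition~(3) holds and the irreducible constituents of $\sigma_{X\by Y}$ lie in $X\by Y$. Together these establish $(\ZZ,\MM)\in\Sup(G)$. I do not expect a genuine obstacle here: the only point requiring any care is the clean bookkeeping behind the identity $\sigma_{X\by Y}=\sigma_X\times\sigma_Y$, and the verification that $X\by Y$ really is a partition of $\Irr(G)$ (rather than merely a covering), which follows from the fact that distinct pairs $(\phi,\theta)$ give distinct irreducibles of $M\by N$. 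An alternative, slicker route would bypass the character computation entirely by invoking Proposition~\ref{sringbijection}: one checks that $\spn_\C\{\widehat{K\by L}\}$ is the image of $S_\KK\otimes S_\LL$ under the natural isomorphism $\C[M]\otimes\C[N]\cong\C[M\by N]$, hence an S-ring lying in $\Zb(\C[G])=\Zb(\C[M])\otimes\Zb(\C[N])$, and reads off the corresponding supercharacter theory; but the direct verification above is shorter and more self-contained.
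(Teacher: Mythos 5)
Your proposal is correct and follows essentially the same route as the paper: count $|\ZZ|=|\XX||\YY|=|\KK||\LL|=|\MM|$ and establish the factorization $\sigma_{X\by Y}(m,n)=\sigma_X(m)\sigma_Y(n)$ to get constancy on each $K\by L$. Your explicit check of condition (3) and the remark about the S-ring/tensor-product route are harmless additions, but the core argument is the same as the paper's.
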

\begin{proof}
Certainly $|\ZZ| = |\XX||\YY| = |\KK| |\LL| = |\MM|$.
%and since $\{1\}\in\KK$ and $\{1\}\in\LL$,
%it follows that $\{1\}\in\MM$.
%
So it suffices to show for all sets $X\in\XX$, $Y\in\YY$, $K\in\KK$, and $L\in\LL$
that the character $\sigma_{X\by Y}$ 
is constant on the set $K\by L$.
For all $m\in M$ and $n\in N$, we have
\begin{eqnarray*}
%\sigma_{XY}(g) &=& 
\sigma_{X\by Y}((m,n))
    &=& \sum_{\phi\in X} \sum_{\theta\in Y} (\phi\by\theta)((1,1))\cdot (\phi\by\theta)((m,n)) \\
    &=& \sum_{\phi\in X} \sum_{\theta\in Y} \phi(1)\theta(1)\phi(m)\theta(n) \\
%    &=& \sum_{\phi\in X} \sum_{\theta\in Y} \phi(1)\phi(m)\theta(1)\theta(n) \\
    &=& \sum_{\phi\in X} \phi(1)\phi(m) \sum_{\theta\in Y} \theta(1)\theta(n) \\
    &=& \sigma_X(m) \sigma_Y(n).% \\
    %&=& (\sigma_X \by \sigma_Y) ((m,n)).
    %&=& (\sigma_X \by \sigma_Y) (g).
\end{eqnarray*}
Thus for all $m,m'\in K$ and all $n,n'\in L$,
$$\sigma_{X\by Y}((m,n))
   = \sigma_X(m)\sigma_Y(n)
   = \sigma_X(m')\sigma_Y(n')
   = \sigma_{X\by Y}((m',n')).$$
Thus $\sigma_{X\by Y}$ is in fact constant on $K\by L$.
We conclude that $(\ZZ,\KK)$ is indeed a supercharacter theory of $G$.\qed
\end{proof}

\bibliography{ssr_bib}

\end{document}